\newtheorem{theorem}{Theorem}[section]
\theoremstyle{plain}
\newtheorem{corollary}[theorem]{Corollary}
\newtheorem{definition}[theorem]{Definition}
\newtheorem{example}[theorem]{Example}
\newtheorem{lemma}[theorem]{Lemma}
\newtheorem{notation}[theorem]{Notation}
\newtheorem{observation}[theorem]{Observation}
\newtheorem{proposition}[theorem]{Proposition}
\newtheorem{remark}[theorem]{Remark}
\newcommand{\X}{\mathcal{X}}
\newcommand{\Y}{\mathcal{Y}}
\newcommand{\A}{\mathbb{A}}
\newcommand{\Q}{\mathbb{Q}}
\newcommand{\OO}{\mathcal{O}}
\newcommand{\C}{\mathbb{C}}
\newcommand{\R}{\mathbb{R}}
\newcommand{\Z}{\mathbb{Z}}
\newcommand{\PP}{\mathbb{P}}
\newcommand{\N}{\mathbb{N}}
\title[Anticanonical models of smoothings of CQS]{Anticanonical models of smoothings of cyclic quotient singularities}
\author{Ari\'e Stern}
\email{stern@math.umass.edu}
\begin{document}

\begin{abstract}
Given a surface cyclic quotient singularity $Q\in Y$, it is an open problem to determine all smoothings of $Y$ that admit an anticanonical model and to compute it. In \cite{HTU}, Hacking, Tevelev and Urz\'ua studied certain irreducible components of the versal deformation space of $Y$, and within these components, they found one parameter smoothings $\Y \to \A^1$ that admit an anticanonical model and proved that they have canonical singularities. Moreover, they compute explicitly the anticanonical models that have terminal singularities using Mori's division algorithm \cite{M02}.
We study one parameter smoothings in these components that admit an anticanonical model with canonical but non-terminal singularities with the goal of classifying them completely. We identify certain class of  ``diagonal" smoothings where the total space is a toric threefold and  we construct the anticanonical model explicitly using the toric MMP.
\end{abstract}

\maketitle

\section{Introduction}
Let $X$ be a smooth projective variety defined over $\C$, and let $K_X$ be the canonical divisor. Then the graded ring 
$$R(X,K_X)=\bigoplus_{m\ge 0} H^0(X,mK_X)$$
is called the \emph{canonical ring}. It is a birational invariant and Birkar-Cascini-Hacon-McKernan proved in \cite{BCHM} that it is a finitely generated graded algebra over $\C$, thus proving the existence of the \emph{canonical model}: Proj $R(X,K_X)$.
Similarly, one can consider the \emph{anticanonical ring} 
$$R(X,-K_X)=\bigoplus_{m\ge 0} H^0(X,-mK_X).$$
This ring is not a birational invariant and is not necessarily finitely generated, see \cite{S82} for a discussion in the case of ruled surfaces. When the anticanonical ring is finitely generated, then Proj $R(X,-K_X)$ is called the \emph{anticanonical model} of $X$.
Two varieties isomorphic in codimension one clearly have the same anticanonical model (if it exists).  Anticanonical models have been proven to be useful in the study of birational properties of Fano threefolds (see \cite{I80}), and a characterization of varieties of Fano type is given in \cite{CG} in terms of the singularities of their anticanonical models. 
\vspace{0.3cm}

A surface cyclic quotient singularity $Q\in Y$ is a germ at the origin of the quotient of $\C^2$ by a group action $(x,y)\mapsto (\mu x, \mu^q y)$ where $\mu$ is a primitive $m$-th root of unity, $1\leq q <m$ and $gcd(m,q)=1$. We denote this singularity by $\frac{1}{m}(1,q)$. Let $\tilde{Y} \to Y$ be the minimal resolution of $Y$, then on $\tilde{Y}$ we have a chain of exceptional curves $E_i$, $1\leq i \leq s$, such that $E_i^2=-b_i$ where the numbers $b_i$ appear in the Hirzebruch-Jung continued fraction 
$$\frac{m}{q}=[b_1,\ldots ,b_s ].$$
\vspace{0.2cm}

We are interested in anticanonical models of smoothings $\Y$ of $Y$ over a disc. The canonical model of $\Y$, which is given by a deformation of a P-resolution of $Y$ (see below), is very useful in the study of deformations of $Y$ \cite{KSB88} and moduli spaces of surfaces of general type \cite{HTU}. We do not assume $\Y$ is $\Q$-Gorenstein, so $K_{\Y}$ is not $\Q$-Cartier in general. The variety $\Y$ is normal and not projective, so we have to define what we mean by an anticanonical model of $\Y$ in this case. Note that $K_{\Y}$ is not Cartier so $\OO (-nK_{\Y})$ is a divisorial sheaf, not a line bundle. We define the anticanonical model of $\Y$ as 
$$\text{Proj}_{\Y} \bigoplus_{n \geq 0} \OO (-nK_{\Y})$$
under the condition that this is a sheaf of finitely-generated algebras. 
\vspace{0.3cm}

A normal surface singularity is called a $T$-singularity if it is a quotient singularity and it admits a $\Q$-Gorenstein smoothing. An explicit description of these singularities can be found on \cite{KSB88}. $T$-singularities include Wahl singularities, i.e., cyclic quotient singularities of the form
$$\frac{1}{m^2}(1,ma-1)$$
where $1\leq a <m$ and $gcd(m,a)=1$. By \cite[Theorem 3.9]{KSB88}, there is a correspondence between irreducible components in the versal deformation space of a cyclic quotient singularity $Y$ and P-resolutions of $Y$, i.e., partial resolutions $f^+ \colon X^+ \to Y$, such that $X^+$ has only T-singularities and $K_{X^+}$ is relatively ample. Any deformation $\Y$ of $Y$ within the corresponding component is obtained by blowing down a $\Q$-Gorenstein deformation $\X^+$ of $X^+$, which gives the canonical model of $\Y$.  
\vspace{0.3cm}

Extremal P-resolutions (introduced in \cite{HTU}) will be particularly important to us. An extremal P-resolution is a P-resolution $X^+ \to Y$ with the additional properties that the exceptional set is a curve\\ $C^+ \simeq \PP^1$, and $X^+$ has at most two Wahl singularities $$(P \in X^+) \simeq \A^2/ \frac{1}{m^2}(1,ma-1)$$ along $C^+$. By \cite{HTU}, a cyclic quotient singularity $Q\in Y$ admits at most two extremal P-resolutions. One-parameter $\Q$-Gorenstein smoothings of extremal P-resolutions have important numerical invariants called axial multiplicities (see Definition \ref{axial}) which determine the deformation locally around the singular points of $\X^+$.

\vspace{0.3cm}

Hacking, Tevelev and Urz\'ua proved in \cite[Corollary 3.23]{HTU}, that a smoothing $\Y \to \A^1$ of $Y$ admits an anticanonical model if it is obtained by blowing down a smoothing $\X^+ \to \A^1$ of an extremal P-resolution $X^+$. Let $-c$ be the selfintersection of the proper transform of $C^+$ in the minimal resolution of $X^+$ and let $$\delta=cm_1'm_2'-m_1'a_2'-m_2'a_1'.$$ Let $\alpha_1, \alpha_2$ be the axial multiplicities of the singularities of $\X^+$. If $\alpha_1^2 -\delta \alpha_1 \alpha_2 + \alpha_2^ 2 >0$ then the anticanonical model has terminal singularities and it can be computed explicitly using Mori's division algorithm \cite{M02} and \cite{HTU}.
Let $$\mathcal{R}=\{(\alpha_1,\alpha_2) | \alpha_1^2 -\delta \alpha_1 \alpha_2 + \alpha_2^ 2 \leq 0 \}$$ We will call this set the \textit{Canonical Region} (see Figure 2).
If $(\alpha_1,\alpha_2)\in \mathcal{R}$, then the anticanonical model has non-terminal singularities \cite{HTU}, and no explicit construction or description is known. We work out the diagonal case, i.e. when $\alpha_1=\alpha_2$, and
we prove by a non-trivial change of coordinates that $\X^+$ and $\Y$ are toric threefolds. The map $\X^+ \to \Y$ is toric, but the induced deformation of $X^+$ is not toric in the sense of \cite{A95}. Then using tools from toric geometry, we prove

\begin{theorem}
\label{diagonal}
Let $\X^+$ be a $\Q$-Gorenstein smoothing of an extremal $P$-resolution with axial multiplicities $\alpha_1=\alpha_2$, and singularities  $$\frac{1}{m_1'^2}(1,m_1'a_1'-1), \qquad \frac{1}{m_2'^2}(1,m_2'a_2'-1)$$ Let $\langle w_1,w_2,w_3 \rangle$ be a basis of $N=\Z^3$. Then there exist vectors $$w_4=m_1'w_1+w_2+cw_3 \text{  and  } w_5=-m_2'w_1+w_2+dw_3$$ for some $c,d\in \Z$ such that $\Y, \X^+$ and the anticanonical model $\X^-$ are analytically isomorphic to toric varieties given by the fans in Figure 1. 
In this way we get $\X^-=W_1 \cup W_2$ where $W_1=\frac{1}{\delta}(-\rho -1,\rho,1)$ and $W_2=\frac{1}{F}(\lambda,1,-1)$ for some $\rho, \lambda \in \Z$ (given explicitly in the proof) and $F=m_1'+m_2'$.
\end{theorem}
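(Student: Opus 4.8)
The plan is to turn the entire problem into one explicit combinatorial computation on a fan in $N_{\R}=\R^3$. I would begin with the local picture: the total space of a one-parameter $\Q$-Gorenstein smoothing of a Wahl singularity $\frac{1}{m^2}(1,ma-1)$ is analytically isomorphic to the terminal cyclic quotient threefold $\frac{1}{m}(1,-1,a)$, which is the affine toric variety of an explicit simplicial cone. Hence near each of the two singular points $P_1,P_2$ of $\X^+$ there is already a toric chart modeled on $\frac{1}{m_1'}(1,-1,a_1')$ and $\frac{1}{m_2'}(1,-1,a_2')$. I would then write down the candidate fan with rays $w_1,\dots,w_5$: here $w_1$ is the direction along the central curve $C^+$, the two singular points sit at $\pm m_i'$ in the $w_1$-direction (hence the coefficients $m_1'$ and $-m_2'$ in $w_4,w_5$), the common $w_2$-coefficient equal to $1$ records the single shared axial multiplicity, and the integers $c,d$ are the twisting parameters that must be chosen so that the two extremal cones reproduce exactly the local models above, i.e. so that the Wahl residues $a_1',a_2'$ come out correctly from the cone generators (see Figure~1).

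The crux, and the step I expect to be the main obstacle, is to prove that the actual smoothing $\X^+$ is analytically isomorphic to this candidate toric variety. Since by \cite{A95} the induced deformation of $X^+$ is genuinely non-toric, I cannot appeal to any general toric-deformation principle; instead I would produce the isomorphism by hand, exhibiting an analytic change of coordinates on the \cite{KSB88} equations of the smoothing that matches the deformation parameter $t$ with the appropriate monomial, and checking that the resulting transition maps between the two charts are monomial (so that a single algebraic torus acts). The diagonal hypothesis $\alpha_1=\alpha_2$ is used precisely here: the axial multiplicity measures the order of vanishing of $t$ at each $P_i$, and only when these two orders agree do the two local toric identifications patch compatibly across $C^+$. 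Concretely I would compare the valuations and gradings induced by the two charts and verify they glue to one grading; this is where the common $w_2$-coefficient of $w_4$ and $w_5$ originates.

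Once toricity is established, the blow-down $\X^+\to\Y$ is automatically toric, and $\Y$ is the affine toric variety obtained by deleting the ray $w_1$ over $C^+$ and amalgamating the adjacent cones. Finally I would run the toric MMP for $-K$ over $\Y$: on a simplicial fan this is a finite combinatorial algorithm, in which one computes the anticanonical class, locates the walls on which $-K$ is negative, and performs the corresponding toric flips and divisorial contractions. By \cite[Corollary~3.23]{HTU} the anticanonical model exists and, in the canonical region $\mathcal{R}$, has canonical but non-terminal singularities, so this process must terminate in the union of two affine toric charts $W_1\cup W_2$. Reading the singularity types off the two maximal cones of the output fan yields $W_1=\frac{1}{\delta}(-\rho-1,\rho,1)$ and $W_2=\frac{1}{F}(\lambda,1,-1)$, with the integers $\rho,\lambda$ emerging from the lattice computation and the indices matching $\delta=cm_1'm_2'-m_1'a_2'-m_2'a_1'$ and $F=m_1'+m_2'$; verifying that these two indices equal the lattice multiplicities of the final cones gives a clean internal consistency check on the whole construction.
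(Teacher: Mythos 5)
Your outline follows the same route as the paper: start from the Mori--HTU chart equations $U_i'=(\xi_i'\eta_i'={\zeta_i'}^{m_i'}+u_i')\subset \A^3/\frac{1}{m_i'}(1,-1,a_i')\times\A^1$, show that in the diagonal case the total space becomes toric after a change of coordinates, observe that $\X^+\to\Y$ is then toric, and obtain $\X^-$ purely combinatorially, reading off $W_1,W_2$ from a lattice (Smith normal form) computation. But there is a genuine gap exactly at the step you yourself flag as the crux: you never exhibit, nor prove the existence of, the coordinate change that simultaneously makes both chart equations \emph{and} the transition functions monomial, and this existence is the actual content of the theorem rather than a verification that can be deferred. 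What is needed (and what the paper produces) is the substitution $\tilde\eta_1=\eta_1'+{\xi_1'}^{\delta-1}{\zeta_1'}^{m_2'}$, $\tilde\eta_2=\eta_2'+{\xi_2'}^{\delta-1}{\zeta_2'}^{m_1'}$, together with the new fourth coordinate $q={\zeta_1'}^{m_1'}+{\zeta_2'}^{m_2'}+u$, which is the \emph{same} function on both charts precisely because $u_1=u_2$; one then checks that $\xi_i'\tilde\eta_i=q$, that the substitution is $\Z_{m_i'}$-equivariant (this uses $\delta+m_2'a_1'\equiv 0 \pmod{m_1'}$), and that the gluing becomes ${\xi_1'}^{m_1'}={\xi_2'}^{-m_2'}$, $\xi_1'\tilde\eta_1=\xi_2'\tilde\eta_2$. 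Saying you would ``produce the isomorphism by hand'' and ``compare valuations and gradings'' names the problem; it does not solve it, and nothing in your text constrains what the substitution must look like.

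Your combinatorial setup also contains an error that would derail the final MMP step. You make $w_1$ a ray of the candidate fan (``the direction along the central curve $C^+$'') and describe $\X^+\to\Y$ as deleting that ray. In a toric threefold a curve corresponds to a two-dimensional wall, not a ray, and $\X^+\to\Y$ is a \emph{small} contraction, so the fan of $\X^+$ and the cone of $\Y$ must have exactly the same rays; deleting a ray would contract a divisor. The correct picture is: $w_1$ is only an auxiliary basis vector; $\Y$ is the single cone $\mathrm{Cone}(w_2,w_3,w_4,w_5)$ (a cone over a quadrilateral); $\X^+$ is its triangulation with interior wall $\mathrm{Cone}(w_2,w_3)$, which is the cone corresponding to $C^+$; and $\X^-$ is the unique other triangulation, with wall $\mathrm{Cone}(w_4,w_5)$ and maximal cones $\mathrm{Cone}(w_2,w_4,w_5)$, $\mathrm{Cone}(w_3,w_4,w_5)$, whose quotient types are then computed to be $\frac{1}{\delta}(-\rho-1,\rho,1)$ and $\frac{1}{F}(\lambda,1,-1)$. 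With a five-ray fan, the relative MMP over $\Y$ could not terminate in these two affine charts without first contracting a superfluous exceptional divisor that the actual $\X^+$ does not have. A further caveat on your starting point: the total space of a one-parameter $\Q$-Gorenstein smoothing of a Wahl singularity is isomorphic to $\frac{1}{m}(1,-1,a)$ only when the axial multiplicity is $1$ (the case $\xi\eta=\zeta^m+u$); for $\alpha\ge 2$ the chart $\xi\eta=\zeta^m+t^\alpha$ is not a quotient singularity at all, so the local toric models you begin with must be taken in the family with base parameter $u=u_1=u_2$, exactly as the paper does.
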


\begin{figure}[h]
\centering
\includegraphics[scale=0.15]{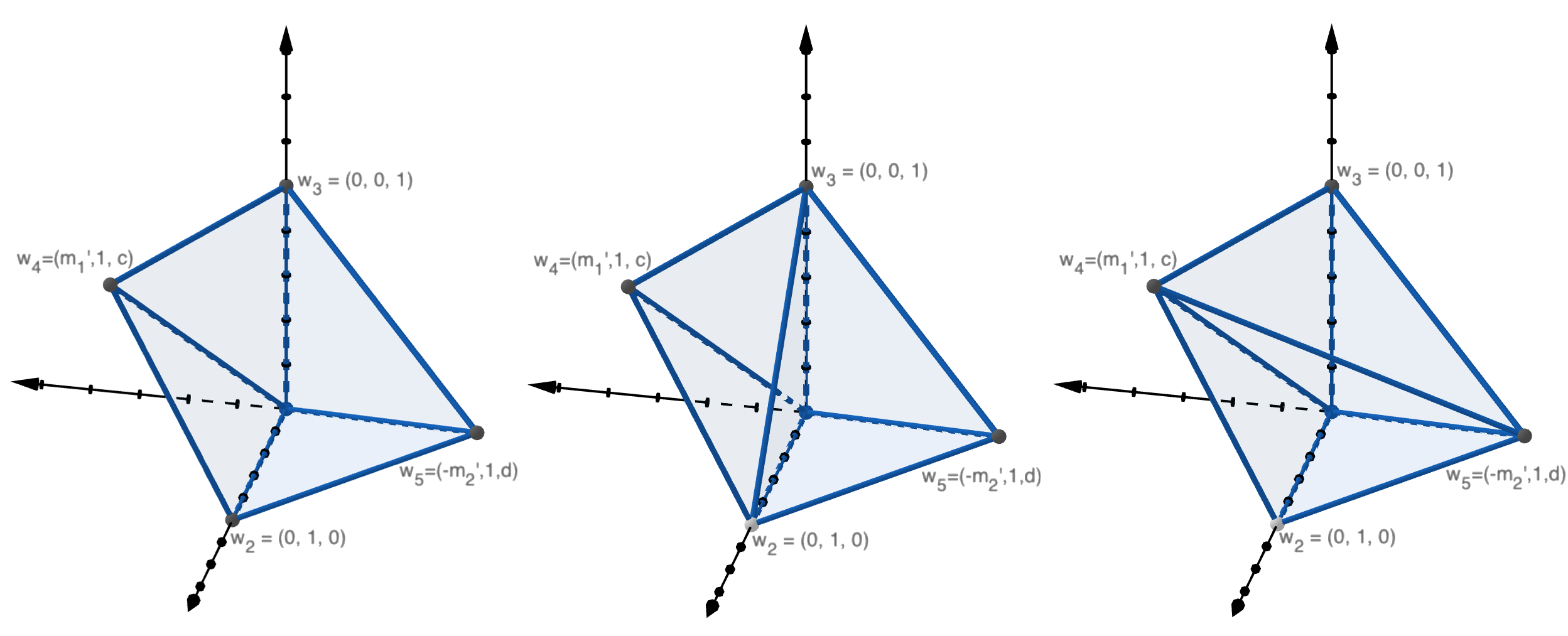}
\caption{Fans of $\Y$, $\X^+$ and $\X^-$}
\end{figure}

By the classification of Ishida and Iwashita (\cite{IsIw}), we have that the singularities of $\X^-$ are canonical, confirming \cite[Remark 3.25]{HTU}. Note that
$W_1$ is non-terminal and $W_2$ is terminal if and only if $gcd(F,\lambda)=1$.

Then we analyze the special fiber of $\X^-$. We say that a Hirzebruch-Jung continued fraction $[a_1,\ldots , a_r]$ is conjugate to $[b_1,\ldots , b_s]$ if $[a_1,\ldots , a_r]$ is the continued fraction of $\frac{m}{m-q}$.
An important type of surfaces that will be used repeatedly on this paper, are orbifold normal crossings. This surfaces are defined by an equation 
$$(YZ=0)\subset \A^3/\frac{1}{m}(1,a,-a)$$
and they have two braches with two conjugate cyclic quotient singularities 
$$\frac{1}{m}(1,a) \quad \text{and} \quad \frac{1}{m}(1,-a).$$
An orbifold normal crossing singularity admits a terminal smoothing defined as follows:
$$(YZ=tf(X^m))\subset \A^3/\frac{1}{m}(1,a,-a) \times A^1_t$$
with $f(0)\neq 0$.
\vspace{0.3cm}

We analyze cases $\delta \geq 3$ and $\delta=2$ separately. We prove

\begin{theorem}
\label{specialfiber}
If $\delta \geq 3$, then the special fiber $X^-$ is a non normal surface, singular along a curve $C^- \simeq \PP^1$. A transversal slice of $\X^-$ through a general point of $C^-$ is a surface with an $A_{\delta -1}$ singularity.
Let $X^{\nu}$ be the normalization of $X^-$ and let $\bar{X}$ be the minimal resolution of $X^{\nu}$. Then $\bar{X}$ has the following configuration of smooth rational curves:
\begin{equation}
[a_r, \ldots , a_1] - \circ - [c_1, \ldots , c_l] - \circ - [b_1, \ldots , b_s]
\label{eq 1}
\end{equation}
where the chains $[a_1, \ldots, a_r]$ and $[b_1, \ldots, b_s]$ correspond to conjugate cyclic quotient singularities and $\circ$ denotes a $(-1)$-curve. $X^{\nu}$ is obtained from $\bar{X}$ by contracting the $[a],[b],[c]$ chains of rational curves. Finally to obtain $C^-$ and $X^-$ we glue the two $(-1)$-curves creating a orbifold normal crossing point (the image of the [a] and [b] chains). Locally around the non-terminal point (the image of the [c] chain), $X^-$ is given by the equation 
\begin{equation}
(XYZ=Y^{\delta}+ Z^{\delta})\subset A^3/\frac{1}{\delta}(-\rho -1, \rho, 1).
\end{equation}
\end{theorem}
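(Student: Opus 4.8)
The plan is to reduce the whole statement to explicit computations on the toric threefold $\X^-$ furnished by Theorem \ref{diagonal}, carried out separately over the two affine charts $W_1 = \frac{1}{\delta}(-\rho-1,\rho,1)$ and $W_2 = \frac{1}{F}(\lambda,1,-1)$. The special fiber is $X^- = \{t=0\}$, where $t$ is the smoothing parameter regarded as a regular function on $\X^-$. Since the family is not toric in the sense of \cite{A95}, $t$ is not a single character but a combination of invariant monomials, and the first task is to express it in the invariant coordinates of each chart. I expect this step to absorb the change of coordinates of Theorem \ref{diagonal} and to carry most of the bookkeeping.

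First I would work in $W_1$, near the non-terminal point. Writing $t$ in the invariant coordinates $X,Y,Z$ on $\A^3/\frac{1}{\delta}(-\rho-1,\rho,1)$, I expect it to take the form $t = XYZ - Y^{\delta} - Z^{\delta}$, each monomial being invariant for the weights $(-\rho-1,\rho,1)$, so that $X^- \cap W_1$ is the stated surface $\{XYZ = Y^{\delta}+Z^{\delta}\}$. A Jacobian computation then shows this trinomial surface is singular exactly along the image of the $X$-axis, and that for $\delta \ge 3$ it has two smooth analytic branches there: the fiber of the projection to the $X$-line is a node for each nonzero value of $X$ and degenerates to $\delta$ concurrent lines at the origin $p_C$. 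This already exhibits $X^-$ as non-normal along a curve and identifies that curve, locally, with the $X$-axis.

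To globalize, I would identify $C^-$ with the closure of the one-dimensional torus orbit attached to the wall $\tau$ shared by the maximal cones of $W_1$ and $W_2$, so that $C^- \simeq \PP^1$. The transverse-slice claim is then local toric data: the transverse singularity of $\X^-$ along this curve is the two-dimensional cyclic quotient singularity cut out by the lattice geometry of $\tau$, and I would check the lattice arithmetic identifies it as $A_{\delta-1}$. The heart of the argument is the normalization $X^\nu$, obtained by pulling the two sheets apart along $C^-$, together with its minimal resolution $\bar X$. I would compute $\bar X$ by Hirzebruch--Jung resolution of the cyclic quotient singularities that occur: the image of $p_C$ produces the inner chain $[c_1,\ldots,c_l]$, while the orbifold normal crossing point supported in $W_2$ produces the two outer chains, which are conjugate because the two branches of an orbifold normal crossing carry the conjugate singularities $\frac{1}{m}(1,a)$ and $\frac{1}{m}(1,-a)$. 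A self-intersection computation shows the two separating curves are $(-1)$-curves, giving configuration \ref{eq 1}; contracting the three chains returns $X^\nu$, and the normalization map $X^\nu \to X^-$ re-glues the two $(-1)$-curves into $C^-$ and identifies their outer endpoints into the single orbifold normal crossing point.

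The main obstacle is the interplay between the toric threefold and its genuinely non-toric special fiber: because $X^-$ is cut out by the trinomial rather than a binomial, its normalization and resolution cannot be read off from a two-dimensional fan. I expect to handle this either by a weighted toric blow-up of $\X^-$ after which the two sheets separate and $t=0$ becomes normal crossing, or by normalizing the trinomial directly and matching the resulting quotient singularities to the conjugate continued fractions. Confirming that the two connecting curves have self-intersection exactly $-1$, so that contraction of the chains recovers $X^\nu$ and the gluing produces precisely an orbifold normal crossing rather than a worse singularity, is the delicate point; the remaining verifications are Hirzebruch--Jung arithmetic governed by the data of Theorem \ref{diagonal}.
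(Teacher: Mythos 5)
Your outline reproduces the paper's architecture quite closely: the paper proves your chart equations in a preliminary lemma (the deformation parameter becomes $u=X_1Y_1Z_1-Y_1^{\delta}-Z_1^{\delta}$ on $W_1$ and $u=Y_2Z_2-X_2^{m_1'}Z_2^{\delta}-X_2^{m_2'}Y_2^{\delta}$ on $W_2$), it identifies the orbifold normal crossing point in $W_2$ by a tangent-cone argument (at the origin $T_2$ is analytically isomorphic to its tangent cone $\{Y_2Z_2=0\}$, so $S_2$ is $(Y_2Z_2=0)\subset \frac{1}{F}(\lambda,1,-1)$ there), and the inner chain comes from the $W_1$ point, as you predict. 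The genuine gap is that the step you defer --- ``normalizing the trinomial directly'' --- is the entire technical content of the paper's proof, and it is not Hirzebruch--Jung bookkeeping: you cannot run Hirzebruch--Jung until you know \emph{which} cyclic quotient singularity sits at the image of $p_C$, and identifying it is the main work. The paper does this by adjoining to $\C[X_1,Y_1,Z_1]$ the single integral element $w=(X_1Y_1-Z_1^{\delta-1})/Y_1$, which satisfies $w^2-wX_1+(Y_1Z_1)^{\delta-2}=0$, recognizing the resulting binomial ring as the cyclic quotient singularity $\frac{1}{\delta(\delta-2)}\bigl(1,(\delta-2)(\delta-1)-1\bigr)$ (hence normal, hence equal to $T_1^{\nu}$), and only then forming the residual $\mu_{\delta}$-quotient. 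That quotient hides a subtlety your plan does not anticipate: the $\mu_{\delta}$-action does not lift to a linear action on the orbifold chart $\C^2_{p,q}$ of $T_1^{\nu}$ unless $\delta$ divides $\rho+1$, so $S_1^{\nu}$ is not a composition of two linear quotients; the paper instead searches for invariant monomials in the monomial lattice of $T_1^{\nu}$, and the answer splits into two cases according to $\gcd(\rho+1,\delta)$. Until this is carried out you have no $[c]$-chain. (Neither you nor the paper verifies the two $(-1)$-curves directly when $\delta\geq 3$; the cleanest route is the one implicit in your last paragraph, namely that the whole configuration must contract to the minimal resolution of $Q\in Y$.)

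Separately, your toric computation of the transversal slice will not come out as you expect. The slice of the toric threefold through a general point of $C^-=V(\tau)$, $\tau=Cone(w_4,w_5)$, is $\C^2/\Gamma$ where $\Gamma$ is the stabilizer of a general point of that invariant curve; in the chart $W_2=\frac{1}{F}(\lambda,1,-1)$ this stabilizer is $\mu_{\gcd(\lambda,F)}$ acting on the normal coordinates with weights $(1,-1)$, so the slice is $A_{\gcd(\lambda,F)-1}$, and $\gcd(\lambda,F)$ divides $\delta$ (since $m_1'\lambda=Fc(k-1)+\delta$) but need not equal it. Concretely, for the cone over the rational normal curve of degree $5$ (so $m_i'=a_i'=1$, $\delta=3$, $F=2$, $\lambda=3$, as in Corollary \ref{cor rnc}) the two charts are $\frac{1}{3}(1,1,1)$ and $\frac{1}{2}(1,1,1)$, both with isolated singularities, so $\X^-$ is smooth at a general point of $C^-$ and the transversal slice is smooth, not $A_2$. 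Thus the lattice arithmetic you propose refutes, rather than confirms, the $A_{\delta-1}$ assertion for the toric threefold; what it honestly yields is $A_{\gcd(\lambda,F)-1}$. Note that the paper's own proof never addresses this sentence of the theorem (it only shows the surface $X^-$ has two transversal branches along $C^-$, i.e.\ its slice is a node), so at this point you would need either a different interpretation of the claim or a correction to it, not a toric verification.
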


\begin{remark} In \cite{MP}, Mori and Prokhorov classified terminal threefold extremal contractions of type $(IA)$ and $(IA^{\vee})$. These threefolds are germs of an irreducible curve $C$ which has negative intersection with the canonical divisor, and they study a general element $H\in |\OO_C(-K)|$. In the case when $H$ is not normal, they proved that the minimal resolution of the normalization of $H$ has only two possible configurations of exceptional divisors. One of these possibilities consists of a chain of curves as in (\ref{eq 1}). The difference is that the threefolds considered in \cite{MP} are terminal, which is equivalent to the condition that 
$$\sum (c_i-2)\leq 2.$$
Our threefolds are non-terminal so this condition is never true. It would be interesting to classify  canonical but non-terminal extremal neighborhoods having these configuration of exceptional curves. 
\vspace{0.2cm}

\noindent
The non-terminal point (2) is a quotient of a degenerated cusp singularity of type $T^3_{\delta-2,\delta-2}$  according to the notation in \cite{Tzi}.
\end{remark}

Next we consider the case $\delta =2$ when the canonical region $\mathcal{R}$ coincides with the diagonal line.

\begin{theorem}
\label{delta2}
There is a two-to-one correspondence between\\

\begin{empheq}[left=\empheqlbrace, right=\empheqrbrace]{align}
 &\Q \text{-Gorenstein smoothings } X^+\subset \X^+ \text{ with } \delta=2 \text{ and axial } \nonumber \\
 & \text{ multiplicities } \alpha_1=\alpha_2 \text{, except the case of Proposition \ref{rnc4}}\nonumber
\end{empheq}

and the set

\noindent
$$\bigg \{ \{(p,f)\in \Z^2 \hspace{0.1cm} | \hspace{0.1cm} 1\leq p\leq \frac{f}{2} , gcd(p,f)=1\} \bigg \}.$$

\noindent
The special fiber of the antiflip $X^-\subset \X^-$ is a non normal surface, singular along $C^- \simeq \PP^1$. A transversal slice of $\X^-$ through a general point of $C^-$ is a surface with an $A_1$ singularity. 
Let $X^{\nu}$ be the normalization of $X^-$ and $\bar{X}$ is its minimal resolution, then $\bar{X}$ has the following configuration of curves:
$$[a_r, \ldots , a_1] - C - [b_1, \ldots , b_s]$$
where the chains $[a_1, \ldots, a_r]$ and $[b_1, \ldots, b_s]$ correspond to  the conjugate cyclic quotient singularities 
$$\frac{1}{f}(p,1), \text{ and } \frac{1}{f}(p,-1)$$
and $C$ is the proper transform of $C^-$ in $\bar{X}$ and $C^2=-5$.  $X^-$ is obtained from $\bar{X}$ by first contracting the $[a]$ and $[b]$ chains of rational curves and then folding the curve $C$ onto itself, producing an orbifold normal crossing point, two pinch points, and the singularity with local equation $$(XYZ=Y^2+Z^2)\subset A^3/\frac{1}{2}(-\rho -1, \rho, 1).$$
\end{theorem}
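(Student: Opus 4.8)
The plan is to specialize the toric model of Theorem~\ref{diagonal} to the boundary value $\delta=2$ and to read off every assertion from the combinatorics of the resulting fan, exactly in the spirit of the proof of Theorem~\ref{specialfiber}, while tracking the degeneration that occurs when the canonical region $\mathcal{R}$ collapses onto the diagonal line. Concretely, I would first write down the fan of $\X^-$ with $\delta=2$, so that the non-terminal chart becomes $W_1=\frac{1}{2}(-\rho-1,\rho,1)$ and the terminal chart is $W_2=\frac{1}{F}(\lambda,1,-1)$ with $F=m_1'+m_2'$, and then express the smoothing parameter $t$ as an explicit regular function on each chart in toric coordinates. Since the induced family is not a toric morphism (as already noted in Theorem~\ref{diagonal}), the special fiber $X^-=V(t)$ is a hypersurface in the toric threefold $\X^-$ that need not be normal, and I would analyze it chart by chart from its defining equation. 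The transversal $A_{\delta-1}=A_1$ slice of $\X^-$ along $C^-$ is then read directly from the $2$-dimensional cone of the fan transverse to the torus-invariant curve $C^-$, namely $\frac{1}{2}(1,-1)$.

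In the terminal chart $W_2=\A^3/\frac{1}{F}(\lambda,1,-1)$, with coordinates of weights $(\lambda,1,-1)$, I expect $t$ to vanish along the two coordinate divisors, so that near the distinguished point $X^-$ is locally the orbifold normal crossing $(yz=0)\subset \A^3/\frac{1}{F}(\lambda,1,-1)$; that is, its two local sheets cross along $C^-$, which keeps $X^-$ globally irreducible but non-normal. Restricting to the two sheets gives the conjugate pair $\frac{1}{F}(\lambda,-1)$ and $\frac{1}{F}(\lambda,1)$, which identifies $f=F=m_1'+m_2'$ and $p\equiv\lambda\pmod F$. The core of the argument is then to normalize this hypersurface: separating the two sheets along $C^-$ produces $X^{\nu}$, whose minimal resolution $\bar X$ I would compute by the standard toric Hirzebruch--Jung procedure. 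This should yield the chain $[a_r,\dots,a_1]-C-[b_1,\dots,b_s]$, in which the two flanking chains resolve the conjugate singularities $\frac{1}{f}(p,1)$ and $\frac{1}{f}(p,-1)$ and $C$ is the strict transform of $C^-$; I would verify $C^2=-5$ from the relation $v_{i-1}+v_{i+1}=(-C^2)\,v_i$ among adjacent ray generators, equivalently by adjunction on $\bar X$. Finally, the normalization restricted to $C$ is a degree-two cover $C\to C^-$, and Riemann--Hurwitz forces exactly two ramification points, which become the two pinch points; together with the orbifold normal crossing point (image of the two ends of the chain) and the non-terminal point in $W_1$, whose local equation $(XYZ=Y^2+Z^2)\subset\A^3/\frac{1}{2}(-\rho-1,\rho,1)$ is the $\delta=2$ specialization of the equation in Theorem~\ref{specialfiber}, this accounts for all the distinguished points of $C^-$ and establishes the folding description. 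The genuinely delicate point here is that the central $[c]$-chain of Theorem~\ref{specialfiber} has disappeared and the two $(-1)$-curves have merged into the single self-folding curve $C$, so carrying the exact form of $t$ through the quotient to pin down $C^2=-5$ and the pinch-point structure is the main technical obstacle.

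For the correspondence, I would show that the special fiber depends on the smoothing only through the conjugate pair $\frac{1}{f}(p,\pm1)$, i.e.\ through $(p,f)$, normalized to $1\le p\le f/2$ by the conjugation symmetry $p\leftrightarrow f-p$ that interchanges the two sheets. Conversely, every such coprime pair $(p,f)$ should be realized by some $\delta=2$, $\alpha_1=\alpha_2$ smoothing, giving surjectivity. The two-to-one multiplicity I expect to come from the symmetry of the diagonal locus exchanging the two Wahl singularities (equivalently the two axial directions), which acts on the defining data by $(m_1',a_1')\leftrightarrow(m_2',a_2')$ but fixes $f=m_1'+m_2'$ and the pair $(p,f)$; I would then check that the two resulting smoothings are non-isomorphic and are the only preimages. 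The remaining work is the combinatorial bookkeeping translating $(\rho,\lambda,c,d,m_i',a_i')$ into $(p,f)$ and correctly isolating the degenerate family of Proposition~\ref{rnc4}, where the folding construction collapses and must be excluded to keep the count exactly two-to-one. Alongside the $C^2=-5$ computation, this bookkeeping is the second point I anticipate requiring the most care.
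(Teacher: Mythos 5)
Your geometric outline (specialize the toric model of Theorem~\ref{diagonal} to $\delta=2$, write the smoothing parameter as an explicit function on each chart, and analyze the non-toric special fiber chart by chart) does match the paper's strategy, but three of your key steps fail. First, the identification $f=F$, $p\equiv\lambda \pmod F$ is wrong. When $\delta=2$ both $F=m_1'+m_2'$ and $\lambda$ are even (this is exactly the number-theoretic content of Lemma~\ref{cases} and Observation~\ref{k3}), so your pair $(\lambda,F)$ is never coprime and does not even lie in the target set of the theorem. The point you miss is that the $\Z_F$-action restricted to each branch of $(Y_2Z_2=0)$ contains the quasi-reflection given by the element of order two, so the branch singularities are the \emph{small} quotients $\frac{1}{F/2}(\lambda/2,\pm 1)$; the paper's correspondence uses $f=F/2$, $p=\lambda/2$ (Propositions~\ref{onesing} and~\ref{twosing}), and this is what matches Table~\ref{table} (e.g.\ $(f,p)=(2,1)$ comes from $m_1'=1$, $m_2'=3$, i.e.\ $F=4$). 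Second, your proposed verification of $C^2=-5$ via the relation $v_{i-1}+v_{i+1}=(-C^2)v_i$ among ray generators cannot work: the special fiber is cut out by $u=X_1Y_1Z_1-Y_1^2-Z_1^2$, which is not a monomial, so $X^-$, $X^{\nu}$ and $\bar X$ are not toric and there are no ray generators; and adjunction alone underdetermines $C^2$. The paper needs a genuinely different input here: in the two-singularity case it uses invariance of $K^2$ under $\Q$-Gorenstein smoothing, $(K_{X^+})^2=(\omega_{X^-})^2$, since both germs smooth to the same general fiber, combined with the different formula to compute the discrepancies of $C^+$ and $\tilde C$ and solve for $\tilde C^2=-4$, hence $C^2=-5$ (Proposition~\ref{twosing}); in the one-singularity case it identifies the minimal resolution of $S_2^{\nu}$ with that of $X^+$ by explicit Hirzebruch--Jung computations (Proposition~\ref{onesing}). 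Your sketch contains neither idea, and you yourself flag this step as the main obstacle without a viable method for it.

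Third, your explanation of the two-to-one multiplicity is incorrect. Swapping $(m_1',a_1')\leftrightarrow(m_2',a_2')$ merely relabels the two singular points of one and the same extremal P-resolution --- it is the same relabeling that induces $p\leftrightarrow f-p$, which you have already spent on normalizing $1\le p\le f/2$ --- so it cannot produce a second preimage. The two smoothings over a given $(p,f)$ are genuinely different surfaces: for $(f,p)=(3,1)$ they are $[3,5,2]-(-2)$, with one Wahl singularity, and $[4]-(-1)-[6,2,2]$, with two. The paper must construct them by solving the two Bezout problems $m_1'p-a_1'f=1$ and $qf-m_2'p=1$ and checking $\delta=2$ and coprimality in each case (Proposition~\ref{fp}), and then invoke the HTU bound that a cyclic quotient singularity admits at most two extremal P-resolutions to see these exhaust the fiber; your surjectivity claim (``should be realized'') has no construction behind it. Finally, a smaller gap: Riemann--Hurwitz gives the count of ramification points of $C\to C^-$, but not that the surface has pinch points (rather than worse non-normal singularities) there, nor their position relative to the origin of the $W_1$ chart; the paper settles this by the explicit $\Z_2$-quotient computation, which also reveals a case distinction on the parity of $\rho$ (for $\rho$ even the two pinch points of $T_1$ are identified and a new pinch point appears at the origin of $S_1$, while for $\rho$ odd the two pinch points survive separately).
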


\begin{corollary}
If $Q\in Y$ is a cyclic quotient singularity that admits an extremal P-resolution with $\delta=2$, then $Q\in Y$ has exactly two extremal P-resolutions, unless $Q\in Y$ is the cone over the rational normal curve of degree four, which has only one extremal P-resolution. 
\end{corollary}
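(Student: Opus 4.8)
The plan is to derive the corollary from Theorem~\ref{delta2}, Proposition~\ref{rnc4}, and the fact, established in \cite{HTU}, that a cyclic quotient singularity admits at most two extremal $P$-resolutions. Thus the only thing left to prove is that a singularity carrying one extremal $P$-resolution with $\delta=2$ carries a second one, unless it is the cone over the rational normal curve of degree four. I would fix such an extremal $P$-resolution $X^+\to Y$ and pass to its diagonal $\Q$-Gorenstein smoothing $\X^+$ (axial multiplicities $\alpha_1=\alpha_2$), so that Theorem~\ref{delta2} applies directly.

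Assuming we are not in the case of Proposition~\ref{rnc4}, Theorem~\ref{delta2} attaches to $\X^+$ a pair $(p,f)$ with $1\le p\le f/2$ and $\gcd(p,f)=1$, and the correspondence being two-to-one yields a second smoothing $(\X^+)'$ over the same pair. The heart of the argument is to recognise $(\X^+)'$ as a $\Q$-Gorenstein smoothing of a second extremal $P$-resolution of the \emph{same} singularity $Q\in Y$. For this I would use that, by the explicit description in Theorem~\ref{delta2}, the anticanonical model $\X^-$ and its special fibre $X^-$ depend only on $(p,f)$ and on $\delta=2$; in particular the two smoothings share one anticanonical model $\X^-\to\Y$. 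Since $\Y$ is a one-parameter smoothing with central fibre $Y$, both $\X^+$ and $(\X^+)'$ blow down to smoothings of one and the same $Q\in Y$, arising as the two antiflips of the common non-terminal model $X^-$.

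It then remains to see that these two extremal $P$-resolutions are genuinely different and that they are the only ones. Distinctness holds precisely because, for $1\le p< f/2$, the configuration $[a_r,\ldots,a_1]-C-[b_1,\ldots,b_s]$ of $X^-$ is not symmetric, so the two antiflips do not coincide; and since \cite{HTU} forbids a third, $Q\in Y$ has exactly two. Finally, the boundary value $p=f/2$ forces $(p,f)=(1,2)$, where the two conjugate singularities of $X^-$ both equal $\frac{1}{2}(1,1)$ and the configuration becomes symmetric, so that the two antiflips collapse to a single one. This is exactly the situation of Proposition~\ref{rnc4}, in which $Q\in Y$ is the cone over the rational normal curve of degree four and possesses a unique extremal $P$-resolution.

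The step I expect to be the main obstacle is the identification of the two-to-one fibres of Theorem~\ref{delta2} with the two extremal $P$-resolutions of a common $Q\in Y$: one must track $Y$ through the antiflip and verify that the two smoothings over $(p,f)$ are smoothings of the same singularity, rather than of two distinct (for instance, conjugate) ones, and that they collapse to a single $P$-resolution exactly at $(1,2)$. This is where the explicit toric model of $\X^-$ from the proof of Theorem~\ref{delta2} and the recovery of $Y$ by contracting $C^-$ in $X^-$ must be used with care.
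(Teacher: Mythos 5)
Your skeleton (the two-to-one correspondence of Theorem \ref{delta2} combined with the HTU bound that a cyclic quotient singularity has at most two extremal P-resolutions) is the same as the paper's, but the two steps you yourself flag as delicate are carried out incorrectly. The central gap is your claim that the two smoothings lying over one pair $(p,f)$ ``share one anticanonical model $\X^-\to\Y$'' because ``$\X^-$ and its special fibre depend only on $(p,f)$.'' Theorem \ref{delta2} only controls the special fiber $X^-$, not the threefold $\X^-$: the two smoothings live over the two different irreducible components of the deformation space of $Y$ attached to the two P-resolutions, and threefolds over different components can perfectly well have isomorphic special fibers without being isomorphic. Asserting a common $\X^-$ (hence a common $\Y$) is essentially the wormhole phenomenon of Urz\'ua--Vilches, which the paper obtains as a \emph{consequence} of its results (see the final remark), not as an ingredient. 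The paper's actual mechanism avoids the threefolds entirely: Propositions \ref{onesing} and \ref{twosing} prove that the minimal resolution of the normalization $X^{\nu}$ of the special fiber --- the chain resolving $\frac{1}{f}(p,1)$, a $(-5)$-curve, and the chain resolving $\frac{1}{f}(p,-1)$ --- \emph{is} the minimal resolution of $Q\in Y$, because $X^{\nu}\to X^-\to Y$ contracts exactly this chain and all its curves have self-intersection $\le -2$. Hence $(p,f)$ alone determines $Y$, and Proposition \ref{fp} produces, by explicit arithmetic on the Wahl data $(m_1',a_1',m_2',a_2')$, two extremal P-resolutions with $\delta=2$ realizing any given $(p,f)$; these are distinct because one has $m_1'<m_2'$ (or a single singularity) and the other $m_1'>m_2'$, not because of any asymmetry of the chain.

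The second error is the identification of the exceptional case. The pair $(p,f)=(1,2)$ is \emph{not} the cone over the rational normal curve of degree four: it corresponds to $Y=[2,5,2]$, which has the two extremal P-resolutions $(-2)-[5,2]$ and $[2,5]-(-2)$ (first row of Table \ref{table}); the symmetric configuration $[2]-(-5)-[2]$ does not make the two P-resolutions collapse. The same happens at $f=5$, $p=2$, where $X^{\nu}$ has the symmetric configuration $[3,2]-(-5)-[3,2]$ yet the two P-resolutions $[3,2,6,2]-(-1)-[5,2]$ and $[4]-(-1)-[3,5,3,2]$ are genuinely different --- so your distinctness-via-asymmetry argument fails precisely where you need it. The cone over the rational normal curve of degree four has no associated pair $(p,f)$ at all: it is explicitly excluded from the correspondence in Theorem \ref{delta2}, since by Proposition \ref{rnc4} its $X^{\nu}\simeq X^+$ is smooth and there are no conjugate singularities $\frac{1}{f}(p,\pm 1)$ to read off. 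Its having a unique extremal P-resolution is a separate fact, not a degenerate fiber of the correspondence.
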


\begin{proposition}
\label{rnc4}
Let $Q\in Y$ be the cone over the rational normal curve of degree $4$, and let $X^+$ be its minimal resolution. We have $m_i'=a_i'=1$, $i=1,2$, so $\delta=2$. Then $X^-$ has two pinch points along $C^-$, a transversal slice of $\X^-$ through a general point of $C^-$ is a surface with an $A_1$ singularity. The normalization of $X^-$ is a smooth surface $X^{\nu} \simeq X^+$.
\end{proposition}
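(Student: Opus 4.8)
The plan is to treat Proposition \ref{rnc4} as the degenerate instance of the $\delta=2$ family and to extract everything from Theorem \ref{diagonal}, which -- unlike Theorem \ref{delta2} -- does include this case. First I would specialize the toric data. For the cone over the rational normal curve of degree four, $Y\simeq\frac{1}{4}(1,1)$, the extremal $P$-resolution $X^+$ is just the minimal resolution, so $X^+\simeq\mathrm{Tot}\,\OO_{\PP^1}(-4)$ is smooth and $C^+$ is the unique $(-4)$-curve. Plugging $m_1'=m_2'=a_1'=a_2'=1$ into $\delta=cm_1'm_2'-m_1'a_2'-m_2'a_1'$ forces $c=4$, whence $\delta=2$ and $F=m_1'+m_2'=2$; by the symmetry $m_1'=m_2'$ the two generators become $w_4=w_1+w_2+4w_3$ and $w_5=-w_1+w_2+4w_3$, interchanged by the lattice involution $\iota\colon(x,y,z)\mapsto(-x,y,z)$. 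Theorem \ref{diagonal} then presents $\X^-=W_1\cup W_2$ with $W_1=\frac{1}{2}(-\rho-1,\rho,1)$ and $W_2=\frac{1}{2}(\lambda,1,-1)$, whose explicit $\rho,\lambda$ I would read off from the proof of that theorem.

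Next I would identify the special fiber and its normalization. Since the abstract already records that the induced deformation of $X^+$ is \emph{not} toric, $X^-$ is not a torus-invariant divisor but the non-normal hypersurface $\{t=0\}$ cut out by the (non-monomial) smoothing parameter $t$, whose explicit form in the charts $W_1,W_2$ I would again take from the proof of Theorem \ref{diagonal}. The normalization map $\nu\colon X^\nu\to X^-$ is an isomorphism away from the double curve $C^-$ and two-to-one over $C^-$, so that $C^+=\nu^{-1}(C^-)$ maps to $C^-\simeq\PP^1$ with degree two, the deck transformation being the involution induced by $\iota$. Recovering $X^\nu$ by separating the two sheets, I would verify that $\nu$ is an isomorphism off $C^+$ and that the ambient charts pull back to smooth surface germs, thereby identifying $X^\nu$ with $\mathrm{Tot}\,\OO_{\PP^1}(-4)\simeq X^+$ by matching the self-intersection of the folded $(-4)$-curve.

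With the normalization in hand the local structure is a direct computation. Over a general point of $C^-$ the chart $W_1=\frac{1}{2}(-\rho-1,\rho,1)$ is of the form $A_1\times\A^1$ (the weights reduce to $\frac{1}{2}(1,1,0)$ after reordering, so the singular locus is the curve $C^-$), whence the transversal slice of $\X^-$ is an $A_1$ surface singularity and the two sheets of $X^-$ cross transversally there. At the two torus-fixed points of $C^-$, one lying in $W_1$ and one in $W_2$, I would set $t=0$ and recognize the Whitney umbrella $X^2=Y^2Z$, i.e.\ a pinch point; by Riemann--Hurwitz the degree-two cover $C^+\to C^-$ of $\PP^1$'s has exactly two ramification points, yielding exactly two pinch points. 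The point that distinguishes this case is that the conjugate cyclic quotient singularities $\frac{1}{f}(p,\pm1)$ of Theorem \ref{delta2}, which would formally be $\frac{1}{2}(1,\pm1)$ here, do not survive: the chains $[a],[b]$ are absent (equivalently $(p,f)=(1,2)$ sits on the boundary of the admissible range), so neither an orbifold normal crossing point nor the non-terminal surface singularity $(XYZ=Y^2+Z^2)$ occurs, and $X^\nu$ is globally smooth.

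The hard part will be exactly this degeneration analysis at the two ends. Because Theorem \ref{delta2} explicitly excludes this case, I cannot invoke its conclusion; instead I must pin down the non-monomial $t$ inside the quotient charts $W_1,W_2$ and check by hand that $\{t=0\}$ there is a pinch point -- hence smooth after normalization -- rather than the strictly worse singularity appearing generically. Establishing $X^\nu\simeq X^+$ globally, and that no singular points of $X^-$ arise beyond the two pinch points, is the crux; once that local identification is secured, the remaining assertions of the proposition follow formally.
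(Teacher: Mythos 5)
Your overall skeleton does match the paper's route: specialize the toric data of Theorem \ref{diagonal} to $m_i'=a_i'=1$, realize $X^-$ as the non-monomial hypersurface $\{u=0\}$ in the two charts, show the normalization is smooth, and identify $X^\nu\simeq X^+$ by pinning down the self-intersection of the folded curve. However, there is a genuine error at precisely the step you yourself call the crux: the location of the pinch points. You place them at the two torus-fixed points of $C^-$ (the origins of $W_1$ and $W_2$) and plan to ``set $t=0$ and recognize the Whitney umbrella'' there. In fact both torus-fixed points are \emph{normal crossing} points of $X^-$, and the two pinch points sit at non-fixed points of $C^-$. Concretely, in the chart $W_1$ the special fiber is $S_1=(X_1Y_1Z_1=Y_1^2+Z_1^2)\subset \A^3/\frac{1}{2}(-\rho-1,\rho,1)$; viewing $Y_1^2-X_1Y_1Z_1+Z_1^2$ as a binary quadratic form in $(Y_1,Z_1)$ over $\C\{X_1\}$, its discriminant is $X_1^2-4$, so along $C^-=(Y_1=Z_1=0)$ the cover $T_1$ consists of two transverse smooth branches except at $X_1=\pm 2$, where the Whitney umbrellas sit; at the origin ($X_1=0$, the fixed point) it is normal crossings. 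For the cone over the rational normal curve of degree $4$ one has $c(k-1)=0$, $c(k)=-2$ (Corollary \ref{cor rnc}), hence $\rho$ is odd and the action is $\frac{1}{2}(0,1,1)$: it preserves each branch at the origin, so $S_1$ keeps its two pinch points at the images of $(\pm 2,0,0)$ and stays normal crossings at the fixed point. Likewise the origin of $W_2$ gives $(Y_2Z_2=0)\subset \A^3/\frac{1}{2}(0,1,-1)$, two branches each with smooth quotient — again normal crossings, never an umbrella. This is not cosmetic: the whole subtlety is that $X^-$ is \emph{not} torus-invariant, so its singular points of special type need not be at fixed points, and your local verification as planned would find normal crossings where you expect umbrellas and miss the actual pinch points. (This is why the paper proves a separate proposition analyzing $S_1$, with the case division $\rho$ even/odd, before treating Proposition \ref{rnc4}; its proof of the proposition itself then only needs $F=2$, $\lambda\equiv 0$ from Lemma \ref{cases}, smoothness of $S_2^{\nu}$, and the self-intersection argument giving $C^2=-4$.)

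Two smaller slips point to the same confusion. First, the generators should be $w_4=w_1+w_2$ and $w_5=-w_1+w_2+2w_3$: the integers $c,d$ in Theorem \ref{diagonal} are $-c(k-1)=0$ and $-c(k)=2$, not the self-intersection number $c=4$, which you substituted into both. Second, the rnc4 case corresponds formally to $f=F/2=1$ (no residual cyclic quotient singularities at all), not to $(p,f)=(1,2)$; the pair $(1,2)$ is perfectly admissible in Theorem \ref{delta2} and corresponds to the two P-resolutions $(-2)-[5,2]$ and $[2,5]-(-2)$ of Table \ref{table}. Finally, your appeal to a lattice involution $\iota$ as the deck transformation of the normalization is unproven and unnecessary; once the pinch points are correctly located, the double cover $C^+\to C^-$ is branched exactly there, and smoothness of $X^\nu$ plus $C^2=-4$ (which the paper deduces from $(C^+)^2=-4$ via the $\Q$-Gorenstein smoothing comparison of Proposition \ref{twosing}) finishes the identification $X^\nu\simeq X^+$.
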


See Corollary \ref{cor rnc} for the cone over the rational normal curve of degree greater than four.

\begin{remark}
When $\delta=2$ the canonical region $R$ is equal to the diagonal. So Theorem \ref{delta2} and Proposition \ref{rnc4} together with \cite{HTU} give a complete description of the anticanonical models of $\X^+$ in this case. 
\end{remark}

\begin{example}
In Table \ref{table} we show how the correspondence of Theorem \ref{delta2} works for $2 \leq f \leq 7$. For each pair $(f,p)$ we give the two extremal P-resolutions $X^+$ and the surface $X^{\nu}$. 
As it will be proven in Lemma \ref{cases}, the proper transform of $C^+$ in the minimal resolution of $X^+$ has self-intersection $(-1)$ or $(-2)$  and $(C^-)^2=(-5)$. The singular points on $C^+$ and $X^{\nu}$ are represented by their corresponding continued fractions.
\end{example}

\begin{table}[]
\begin{tabular}{|c|c|c|c|}
\hline
f                  & p                  & $X^+$             & $X^{\nu}$   \\ \hline
\multirow{2}{*}{2} & \multirow{2}{*}{1} & (-2)-{[}5,2{]}                   & \multirow{2}{*}{{[}2{]}-(-5)-{[}2{]}}       \\ \cline{3-3}
                   &                    & {[}2,5{]}-(-2)                   &                                             \\ \hline
\multirow{2}{*}{3} & \multirow{2}{*}{1} & {[}3,5,2{]}-(-2)                 & \multirow{2}{*}{{[}3{]}-(-5)-{[}2,2{]}}     \\ \cline{3-3}
                   &                    & {[}4{]}-(-1)-{[}6,2,2{]}         &                                                                                          \\ \hline
\multirow{2}{*}{4} & \multirow{2}{*}{1} & {[}4,5,2,2{]}-(-2)               & \multirow{2}{*}{{[}4{]}-(-5)-{[}2,2,2{]}}   \\ \cline{3-3}
                   &                    & {[}5,2{]}-(-1)-{[}7,2,2,2{]}     &                                              \\ \hline
\multirow{2}{*}{5} & \multirow{2}{*}{1} & {[}5,5,2,2,2{]}-(-2)             & \multirow{2}{*}{{[}5{]}-(-5)-{[}2,2,2,2{]}} \\ \cline{3-3}
                   &                    & {[}6,2,2{]}-(-1)-{[}8,2,2,2,2{]} &                                             \\ \hline
\multirow{2}{*}{5} & \multirow{2}{*}{2} & {[}3,2,6,2{]}-(-1)-{[}5,2{]}     & \multirow{2}{*}{{[}3,2{]}-(-5)-{[}3,2{]}}   \\ \cline{3-3}
                   &                    & {[}4{]}-(-1)-{[}3,5,3,2{]}       &                                              \\ \hline
\multirow{2}{*}{6} & \multirow{2}{*}{1} & {[}6,5,2,2,2,2{]}-(-2)             & \multirow{2}{*}{{[}6{]}-(-5)-{[}2,2,2,2,2{]}} \\ \cline{3-3}
                   &                    & {[}7,2,2,2{]}-(-1)-{[}9,2,2,2,2,2{]} &                                             \\ \hline
\multirow{2}{*}{7} & \multirow{2}{*}{1} & {[}7,5,2,2,2,2,2{]}-(-2)                 & \multirow{2}{*}{{[}7{]}-(-5)-{[}2,2,2,2,2,2{]}}     \\ \cline{3-3}
                   &                    & {[}8,2,2,2,2{]}-(-1)-{[}10,2,2,2,2,2,2{]}         &         
\\ \hline
\multirow{2}{*}{7} & \multirow{2}{*}{2} & {[}4,2,6,2,2{]}-(-1)-{[}6,2,2{]}                 & \multirow{2}{*}{{[}4,2{]}-(-5)-{[}3,2,2{]}}     \\ \cline{3-3}
                   &                    & {[}5,2{]}-(-1)-{[}4,5,3,2,2{]}         &       
\\ \hline
\multirow{2}{*}{7} & \multirow{2}{*}{3} & {[}3,2,2,7,2{]}-(-1)-{[}3,5,2{]}                 & \multirow{2}{*}{{[}3,2,2{]}-(-5)-{[}4,2{]}}     \\ \cline{3-3}
                   &                    & {[}4{]}-(-1)-{[}3,2,5,4,2{]}         &   
\\ \hline
\end{tabular}
\caption{}
\label{table}
\end{table}

\noindent
{\bf Acknowledgements.} I would like to thank my advisor, Jenia Tevelev, for his guidance during my thesis and this work. I would also like to thank Giancarlo Urz\'ua and Paul Hacking for helpful discussions. This project has been partially supported by the NSF grant DMS-1701704 (PI Jenia Tevelev).

\section{Construction of the flip and the diagonal case}

\subsection{Overview of Mori's algorithm \cite{M02,HTU}}

Let $X^+$ be an extremal $P$-resolution of a cyclic quotient singularity $Q\in Y$, and  let $\X^+$ be a $\Q$-Gorenstein smoothing of $X^+$.

\begin{definition}
\label{axial}
By Corollary 3.23 in \cite{HTU}, there is a divisor $D\in |-K_{\X^+}|$ such that the restriction $D|_{X^+}$ is equal to a chain of smooth rational curves $L_1-C^+-L_2 \subset X^+$. Let $P_1,P_2$ be points where $L_1$ and $L_2$ intersect $C^+$ respectively. The points $P_1, P_2$ are either smooth or a Wahl singularity.

\begin{itemize}
\item[(i)] If $P=\frac{1}{m^2}(1,ma-1)$ is a singular point of $X^+$, there is an analytic isomorphism (over $\C$) 
$$(P \in X^+) \simeq (\xi \eta =\zeta ^m) \subset A^3_{\xi, \eta, \zeta}/ \frac{1}{m}(1,-1,a)$$
and then for the deformation $(P\in \X^+) \to \A^1_t$  we get an analytic isomorphism
$$(P\in \X^+) \simeq (0\in \xi \eta = \zeta^m + t^{\alpha})\subset A^3_{\xi, \eta, \zeta}/ \frac{1}{m}(1,-1,a) \times \A^1_t$$
for some $\alpha\in \N$ called the axial multiplicity of $P\in \X^+$.

\item[(ii)] If $P$ is a smooth point of $X^+$ then the local deformation $(P_i\in D \subset \X^+) \to (0\in \A^1_t)$ of $(P_i\in C^+ \subset X^+ )$ is of the form 
$$(0\in (\xi \eta = t^{\alpha_i} h_i(t)) )\subset \A^2_{\xi , \eta} \times \A^1_t$$
for some $\alpha_i \in \N$ and convergent power series $h_i(t)$ with $h_i(0)\neq 0$. The number $\alpha_i$ is called the axial multiplicity of $P_i\in \X^+$.
\end{itemize}

\end{definition}

Let $\frac{1}{m_1'^2}(1,m_1'a_1'-1)$, $\frac{1}{m_2'^2}(1,m_2'a_2'-1)$ be the singularities of $X^+$ with Hirzebruch-Jung continued fractions $\frac{m_1'^2}{m_1'a_1'-1}=[e_1, \ldots , e_{r_1}]$ and $\frac{m_2'^2}{m_2'a_2'-1}=[f_1, \ldots , f_{r_2}]$. Then the singularity $Q\in Y$ is given by $$\frac{\Delta}{\Omega}=[f_{r_2},\ldots , f_1,c,e_1, \ldots , e_{r_1}]$$
where $-c$ is the self-intersection of the proper transform of $C^+$ in the minimal resolution of $X^+$. Define $$\delta=cm_1'm_2'-m_1'a_2'-m_2'a_1'$$ and define $$m_2=m_1',\qquad a_2=m_1'-a_1' \text{ if $m_1'\neq a_1'$, or } a_2=1 \text{ otherwise}$$ and $$m_1=\delta m_1' + m_2', \qquad a_1=\frac{\delta+m_1m_2-a_2m_1}{m_2}$$. 

The numbers $(m_1,a_1,m_2,a_2)$ represent an  ``initial" extremal neighbourhood $\X^{-}$ of type $k2A$ where the special fiber has Wahl singularities $\frac{1}{m
_1^2}(1,m_1a_1-1)$, $\frac{1}{m_2^2}(1,m_2a_2-1)$ and such that the flip of $\X^{-}$ is $\X^+$. In fact, as shown in \cite{HTU}, there is a toric surface $M$ (of locally finite type if $\delta>1$), corresponding to a fan $\Sigma$ with cones $\{0\}$, $\R_{\geq 0} v_i$, $\langle v_i, v_{i+1} \rangle _{\R_{\geq 0}}$ for some primitive vectors $v_i \in \Z^2$ defined as follows: 
$$v_1=(1,0) \text{, } v_2=(\delta,1),$$
$$v_{i+1}+v_{i-1}=\delta v_i$$

There is a toric birational morphism $p \colon M \to \A^2_{u_1'u_2'}$, flat irreducible families of surfaces $$\mathbb{X}^- \to M, \qquad  \mathbb{Y} \to M, \qquad   \mathbb{X}^+\to M$$ and morphisms 
$$\pi \colon \mathbb{X}^- \to \mathbb{Y} \times M,  \text{ and  } \pi^+ \colon \mathbb{X}^+ \to \mathbb{Y}.$$

There exist a morphism $g \colon \A^1_t \to M$ such that $p(g(0))=0\in \A^2$ and such that the flip of $\X^- \to \Y$ is the pullback of $\mathbb{X}^- \to \mathbb{Y}^+$ under $p \circ g$.  
Each $v_i$ has a label $(m_i,a_i)$. If we take a smoothing $\Y$ of $Y$ with axial multiplicites $(\alpha_1, \alpha_2)$ corresponding to one of the $v_i$'s (or corresponding to a ray which is between two consecutive primitive vectors $v_i$, $v_{i+1}$),  then we get an extremal neighbourhood of type $k1A$ (of type k2A, respectively). The special fiber has a Wahl singularity $\frac{1}{m_i^2}(1,m_ia_i-1)$ (respectively, has two Wahl singularities $\frac{1}{m_i^2}(1,m_ia_i-1),\frac{1}{m_{i+1}^2}(1,m_{i+1}a_{i+1}-1)$). All these extremal neighbourhoods have a flip with central fiber equal to $X^+$.

\begin{figure}[h]
\centering
\includegraphics[scale=0.3]{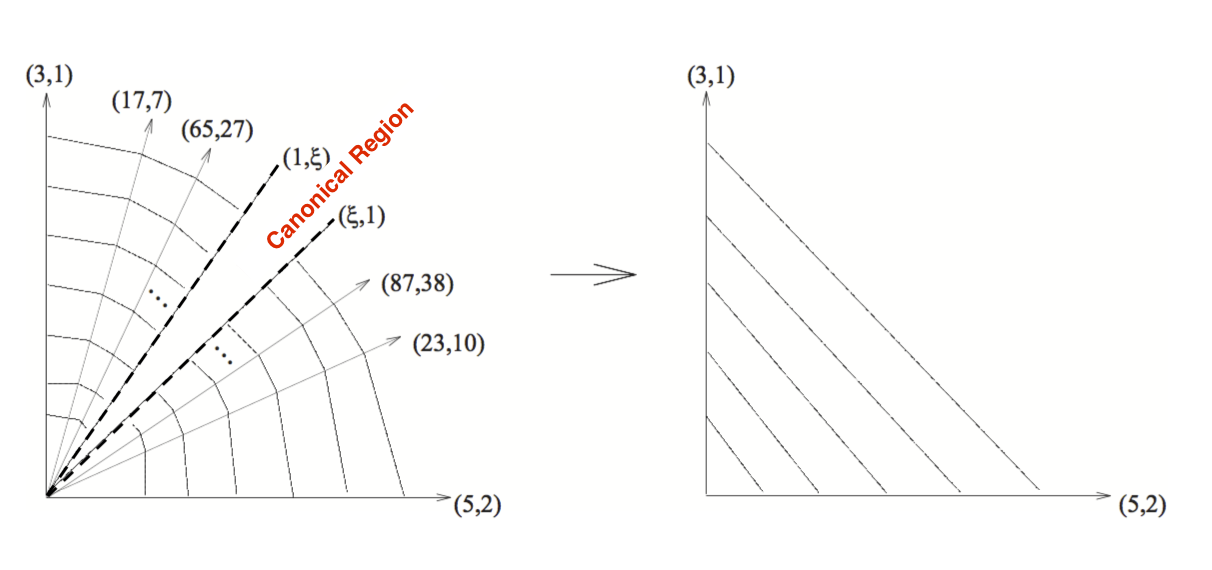}
\caption{Example of a map of fans $p \colon M \to \A^2$}
\end{figure}

\subsection{Construction of the flip}

Next we describe an explicit model for $\X^+$ following \cite{M02} and \cite{HTU}.
\begin{definition}
\label{seq}
 Define a sequence $$d(1)=m_1, \quad d(2)=m_2$$ and $$d(i+1)=\delta d(i)-d(i-1) \text{ for } i\ge 2.$$
By \cite[Lemma 3.3]{HTU}, there exists $k\ge 3$ such that $d(k-1)>0$ and $d(k)\leq 0$. Consider the sequence $c \colon \Z \to \Z$ defined as $$c(1)=a_1, \quad c(2)=m_2-a_2$$ and $$c(i-1)+c(i+1)=\delta c(i) \text{ for } 2\leq i \leq k-1.$$ 
We also define $c(k+1)=-c(k-1)$ and $c(k+2)=-c(k)$.
\end{definition}
\vspace{0.3cm}

Define 
$$W'=(x_1'y_1'=z^{m_1'}x_2'^{\delta}+u_1', x_2'y_2'=z^{m_2'}x_1'^{\delta}+u_2') \subset \A^5_{x_1',x_2',y_1',y_2',z}\times \A^2_{u_1',u_2'}$$
and 
$$\Gamma ' = \{\gamma'=(\gamma_1',\gamma_2')\quad  | \quad \gamma_1'^{m_1'}=\gamma_2'^{m_2'} \}\subset \mathbb{G}_m^2.$$
Define an action of $\Gamma'$ on $W'$ by 
$$\gamma \cdot (x_1',x_2',y_1',y_2',z,u_1',u_2') \mapsto (\gamma_1' x_1', \gamma_2' x_2', {\gamma_1'} ^{-1}y_1', {\gamma_2'}^{-1}, {\gamma_1'}^{c(k-1)}{\gamma_2'}^{c(k)}z, u_1',u_2').$$
Define 
$$W'^0 = W' \setminus (x_1'=x_2'=0)$$
and $$\mathbb{X}^+= (W'^0)/\Gamma'.$$
Write $U_1'=(x_2' \neq 0) \subset \mathbb{X}^+$ and $U_2'=(x_1'\neq 0)\subset \mathbb{X}^+$.\\ 
Then $\mathbb{X}^+=U_1' \cup U_2'$, 
$$U_i'= (\xi_i ' \eta_i '= {\zeta_i '}^{m_i'}+u_i' ) \subset \A^3_{\xi_i, \eta_i', \zeta_i'}/\frac{1}{m_i'}(1,-1,a_i') \times \A^2_{u_1',u_2'}$$
for each $i=1,2$, and glueing is given by 
$$U_1'\supset (\xi_1'\neq 0)=(\xi_2'\neq 0) \subset U_2',$$
$${\xi_1'}^{m_1'}={\xi_2'}^{-m_2'}, \qquad {\xi_1'}^{-c(k-1)}\zeta_1'= {\xi_2'}^{-c(k)}\zeta_2'.$$

\subsection{Proof of Theorem \ref{diagonal}}
Now we consider the diagonal case, corresponding to smoothings $\X^+$ where the axial multiplicities $\alpha_1$ and $\alpha_2$ are equal, which translates into $u_1=u_2=u$ in the equations of $\mathbb{X}^+$. The equations for $\X^+$ appear complicated, but we show that in the diagonal case $\X^+$ is a toric variety using a change of variables.

\begin{lemma}
$\X^+$ is a toric threefold.  
\end{lemma}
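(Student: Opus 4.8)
The plan is to produce an explicit polynomial automorphism of the ambient affine space that turns the two non-binomial equations of the diagonal $W'$ into a \emph{single binomial} equation; once this is done the toric structure is immediate. First I would restrict to the diagonal $u_1'=u_2'=u$, so that $W'$ is the subvariety of $\A^5_{x_1',x_2',y_1',y_2',z}\times\A^1_u$ cut out by
$$x_1'y_1'=z^{m_1'}x_2'^{\delta}+u,\qquad x_2'y_2'=z^{m_2'}x_1'^{\delta}+u.$$
Because $u$ occurs linearly in the first equation, I would use it to eliminate $u$, setting $u=x_1'y_1'-z^{m_1'}x_2'^{\delta}$; the projection forgetting $u$ then identifies $W'$ with the single hypersurface of $\A^5$ obtained by substituting this into the second equation. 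A direct rearrangement gives
$$x_2'\bigl(y_2'+z^{m_1'}x_2'^{\delta-1}\bigr)=x_1'\bigl(y_1'+z^{m_2'}x_1'^{\delta-1}\bigr).$$

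Next I would perform the unipotent change of variables $\tilde y_1'=y_1'+z^{m_2'}x_1'^{\delta-1}$ and $\tilde y_2'=y_2'+z^{m_1'}x_2'^{\delta-1}$, which is a polynomial automorphism of $\A^5$. In the new coordinates the equation becomes the binomial $x_1'\tilde y_1'=x_2'\tilde y_2'$, so $W'$ is isomorphic to $\A^1_z$ times the affine quadric cone $\{x_1'\tilde y_1'=x_2'\tilde y_2'\}\subset\A^4$, a four-dimensional affine toric variety. Removing the invariant locus $(x_1'=x_2'=0)$ and forming $\X^+=(W'^0)/\Gamma'$ then realizes $\X^+$ as the quotient of an open toric variety by the one-parameter group $\Gamma'$, which cuts the dimension down to three.

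It remains to check that $\Gamma'$ still acts through a subtorus in the new coordinates, which is the one point requiring care. The key observation is that the monomials $z^{m_1'}x_2'^{\delta}$ and $z^{m_2'}x_1'^{\delta}$ are $\Gamma'$-invariant: this is exactly what makes the original equations $\Gamma'$-equivariant, and is built into the definition of $\X^+$. Consequently $z^{m_2'}x_1'^{\delta-1}$ carries the same $\Gamma'$-weight as $y_1'$ (namely $\gamma_1'^{-1}$) and $z^{m_1'}x_2'^{\delta-1}$ the same weight as $y_2'$, so $\tilde y_1'$ and $\tilde y_2'$ are again $\Gamma'$-semi-invariant. Hence $\Gamma'$ acts diagonally (monomially) on all five new coordinates, its quotient of the open toric variety $W'^0$ is toric (the finite component group of $\Gamma'$ being a harmless diagonalizable quotient, which is what the word ``analytically'' in Theorem \ref{diagonal} absorbs), and therefore $\X^+$ is a toric threefold.

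I expect the genuine difficulty to be entirely in \emph{finding} the substitution: the diagonal equations look manifestly non-toric, and in the original chart coordinates $\xi_i',\eta_i',\zeta_i'$ the gluing transition is not monomial because the deformation parameter $u$ is a difference of two characters rather than a single one. The algebraic heart of the argument is the cancellation that collapses the two quadric-type relations into one binomial $x_1'\tilde y_1'=x_2'\tilde y_2'$; after that, the semi-invariance bookkeeping is routine and uses only the $\Gamma'$-invariance already guaranteed by the construction of $W'$.
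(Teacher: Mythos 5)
Your proof is correct and is essentially the paper's own argument: the unipotent substitution $\tilde y_i' = y_i' + z^{m_{3-i}'}x_i'^{\delta-1}$ is exactly the change of variables the paper performs, written there chart-by-chart as $\tilde\eta_i = \eta_i' + \xi_i'^{\delta-1}\zeta_i'^{m_{3-i}'}$ with $q_i=\xi_i'\tilde\eta_i$ as the new coordinate, and your $\Gamma'$-weight bookkeeping is the global form of the paper's $\Z_{m_i'}$-equivariance plus gluing checks. The only difference is organizational: you do the substitution once upstairs on $W'$ and then quotient by $\Gamma'$ (a diagonalizable subgroup of the torus of the binomial hypersurface), whereas the paper descends first and verifies that the two chart isomorphisms $f_1,f_2$ glue monomially, which has the side benefit of producing the explicit affine charts $V_1\cup V_2$ that feed directly into the fan computation of the next lemma.
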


\begin{proof}
We will show that $\X^+$ is isomorphic to the variety $Z=V_1 \cup V_2$ where $$V_i=(\xi_i' \tilde{\eta_i} =q_i )\subset \A^3_{\xi_i' , \tilde{\eta_i} , \zeta_i}/\frac{1}{m_i'}(1,-1,a_i')\times \A^1_{q_i}$$ with gluing given by $$\xi_1^{m_1'}=\xi_2^{-m_2'}, \quad \xi_1' \tilde{\eta_1}=\xi_2' \tilde{\eta_2}$$
Consider the map $f_1 \colon \A^3_{\xi_1',\eta_1',\zeta_1'} \times \A^1_u \to \A^3_{\xi_1',\tilde{\eta}_1, \zeta_1'} \times \A^1_{q_1}$ given by $$(\xi_1',\eta_1',\zeta_1',u) \mapsto (\xi_1', \eta_1'+\xi_1'^{\delta-1}\zeta_1'^{m_2'},\zeta_1',  \zeta_1'^{m_1'}+\xi_1'^{\delta} \zeta_1'^{m_2'}+u).$$
\vspace{0.1cm}

\noindent
The group $\Z_{m_1'}$ acts on $\A^3_{\xi_1',\eta_1',\zeta_1'}$ with weights $1,-1,a_1'$ and on $\A^3_{\xi_1',\tilde{\eta}_1, \zeta_1'} $ with the same weights. Let $\gamma$ be a primitive $m_1'$-th root of unity, then 
\vspace{0.2cm}

\begin{equation*}
\begin{split}
& f_1(\gamma \cdot (\xi_1' , \eta_1' ,\zeta_1' , u))\\ 
& =(\gamma \xi_1' , \gamma^{-1}\eta_1' + \gamma^{\delta -1+a_1'm_2'}\xi_1'^{\delta-1} \zeta_1'^{m_2'}, \gamma^{a_1'}\zeta_1', \zeta_1'+ \gamma^{\delta + a_1'm_2'} \xi_1'^{\delta} \zeta_1'^{m_2'} + u)\\
& = (\gamma \xi_1' , \gamma^{-1}\eta_1' + \gamma^{-1}\xi_1'^{\delta-1} \zeta_1'^{m_2'}, \gamma^{a_1'}\zeta_1', \zeta_1'+ \xi_1'^{\delta} \zeta_1'^{m_2'} + u)\\
& = \gamma \cdot f_1(\xi_1', \eta_1', \zeta_1', u)
\end{split}
\end{equation*}

\noindent
where we have used that $\delta= cm_1'm_2' -m_1'a_2' - m_2'a_1'$ and then $\delta+m_2'a_1'$ is a multiple of $m_1'$. Therefore $f_1$ is equivariant and it descends to a map $f_1 \colon U_1' \to V_1$. 

\noindent
Similarly, consider the map  $f_2 \colon \A^3_{\xi_2',\eta_2',\zeta_2'} \times \A^1_u \to \A^3_{\xi_2',\tilde{\eta}_2, \zeta_2'} \times \A^1_{q_2}$ given by $$(\xi_2',\eta_2',\zeta_2',u) \mapsto (\xi_2', \eta_2'+\xi_2'^{\delta-1}\zeta_2'^{m_1'},\zeta_2',  \zeta_2'^{m_2'}+\xi_2'^{\delta} \zeta_2'^{m_1'}+u).$$

\vspace{0.1cm}

\noindent 
The group $\Z_{m_2'}$ acts on $\A^3_{\xi_2',\eta_2',\zeta_2'}$ with weights $1,-1,a_2'$ and on $\A^3_{\xi_2',\tilde{\eta}_2, \zeta_2'} $ with the same weights. A similar argument as above shows that $f_2$ is equivariant with respect to this action and therefore it descends to a map $f_2 \colon U_2' \to V_2$. 

Note that $f_1$ and $f_2$ are isomorphisms since we can write their inverses by expressing $\eta_i'$ and $u$ in terms of $\xi_i' , \tilde{\eta}_i, \zeta_i', q_i$, $i=1,2$. Therefore $U_i' \simeq V_i$. Now we show that the maps $f_1$, $f_2$ can be glued.   
\vspace{0.2cm}

Let $(\xi_1',\eta_1',\zeta_1',u)\subset (\xi_1' \neq 0) \subset U_1'$, then

\begin{eqnarray*}
\xi_1' \eta_1' &= &  \zeta_1'^{m_1'}+u \\  
\xi_1' (\tilde{\eta_1}-\xi_1'^{-1}\zeta_2'^{m_2'}) &= & \zeta_1'^{m_1'}+u\\
\xi_1' \tilde{\eta_1} &= & \zeta_1'^{m_1'} + \zeta_2'^{m_2'} + u\\
\xi_1' \tilde{\eta_1} &= & q_1.
\end{eqnarray*}

Similarly, if $(\xi_2',\eta_2',\zeta_2',u)\subset (\xi_2'\neq 0) \subset U_2'$, then 

\begin{eqnarray*}
\xi_2' \eta_2' &= &  \zeta_2'^{m_2'}+u \\  
\xi_2' (\tilde{\eta_2}-\xi_2'^{-1}\zeta_1'^{m_1'}) &= & \zeta_2'^{m_2'}+u\\
\xi_2' \tilde{\eta_2} &= & \zeta_1'^{m_1'} + \zeta_2'^{m_2'} + u\\
\xi_2' \tilde{\eta_2} &= & q_2.
\end{eqnarray*}

\noindent
Since $ U_1' \supset (\xi_1' \neq 0) =  (\xi_2'\neq 0) \subset U_2'$ we conclude that the gluing on $V_1\supset f_1( (\xi_1' \neq 0))=f_2((\xi_2'\neq 0))\subset V_2$ is given by 
$$\xi_1'^{m_1'}=\xi_2'^{-m_2'},  \quad \xi_1' \tilde{\eta_1}=\xi_2' \tilde{\eta_2}.$$

\noindent
Finally note that all the relations defining $Z$ are monomial relations and $Z$ is normal, so $Z$ is a toric variety. 
\end{proof}

\begin{lemma}
Let $N$ be the lattice generated by $\langle w_1,w_2,w_3 \rangle$ and let  $w_4=m_1'w_1+w_2-c(k-1)w_3 \text{ and } w_5=-m_2'w_1+w_2 -c(k)w_3$ (see Definition \ref{seq} for $c(k-1)$ and $c(k)$), then $\X^+$ is the toric variety corresponding to the fan with maximal cones $$\sigma_1=Cone(w_2, w_3, w_4) \text{ and } \sigma_2=Cone(w_2, w_3,w_5)$$   and $\Y$ is the toric variety corresponding to the  $Cone(w_2,w_3,w_4,w_5)$.
\end{lemma}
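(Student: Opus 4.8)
The plan is to exhibit an explicit affine-toric chart description for each piece $V_1$ and $V_2$ of $Z=\X^+$, read off the generators of each cone from the monomial equations, and then check that the two charts glue according to the lattice relations claimed. I would start from the previous lemma, which identifies $\X^+$ with $Z=V_1\cup V_2$ where
$$V_i=(\xi_i'\,\tilde\eta_i=q_i)\subset \A^3_{\xi_i',\tilde\eta_i,\zeta_i'}/\tfrac{1}{m_i'}(1,-1,a_i')\times\A^1_{q_i}.$$
The key observation is that $q_i=\xi_i'\tilde\eta_i$ is not an independent coordinate, so $V_i$ is really the quotient of $\A^3_{\xi_i',\tilde\eta_i,\zeta_i'}$ by $\tfrac{1}{m_i'}(1,-1,a_i')$, i.e.\ a single cyclic quotient singularity crossed with nothing extra. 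Thus each $V_i$ is the affine toric variety $U_{\sigma_i}$ for a smooth cone's quotient, and I must match the $\tfrac{1}{m_i'}(1,-1,a_i')$ quotient description to the cone $\sigma_i=\mathrm{Cone}(w_2,w_3,w_4)$ (resp.\ $\sigma_2=\mathrm{Cone}(w_2,w_3,w_5)$).

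The central computation is to verify that $\sigma_1$ presents $\A^3/\tfrac{1}{m_1'}(1,-1,a_1')$ with the correct weights, and similarly for $\sigma_2$. First I would note that $\{w_2,w_3,w_4\}$ spans $N\otimes\Q$ but generates a finite-index sublattice of $N$ whose index is $m_1'$: indeed, expressing $w_4=m_1'w_1+w_2-c(k-1)w_3$ shows $\det(w_2,w_3,w_4)$ with respect to the basis $\langle w_1,w_2,w_3\rangle$ equals $\pm m_1'$, so the quotient $N/\langle w_2,w_3,w_4\rangle\cong\Z/m_1'$. The generator of this group, pulled back to the torus characters, must act with weights $(1,-1,a_1')$ on the coordinates $(\xi_1',\tilde\eta_1,\zeta_1')$; I would pin down these weights by writing $w_1$ as the appropriate $\Q$-combination $\tfrac{1}{m_1'}(w_4-w_2+c(k-1)w_3)$ and reading off the residues modulo $m_1'$. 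The exponent $c(k-1)$ entering $w_4$ is exactly what forces the $\zeta_1'$-weight to be $a_1'$: this is where one uses the relation from Definition \ref{seq} together with the earlier identity that $\delta+m_2'a_1'$ is a multiple of $m_1'$, analogous to the equivariance check in the previous lemma. The analogous argument with $w_5=-m_2'w_1+w_2-c(k)w_3$ handles $V_2$, with index $m_2'$ and $\zeta_2'$-weight $a_2'$ governed by $c(k)$.

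Having identified each chart with the right cone, the gluing step is to check compatibility of the two affine toric pieces into a single fan whose maximal cones are $\sigma_1,\sigma_2$ sharing the wall $\mathrm{Cone}(w_2,w_3)$. The gluing locus on $\X^+$ was computed to be $\xi_1'^{m_1'}=\xi_2'^{-m_2'}$ and $\xi_1'\tilde\eta_1=\xi_2'\tilde\eta_2$; I would translate these monomial relations into the statement that the common face of $\sigma_1$ and $\sigma_2$ is precisely the two-dimensional cone $\mathrm{Cone}(w_2,w_3)$, so that removing the locus $(\xi_1'=\xi_2'=0)$ corresponds exactly to deleting the ray complementary to this wall. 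The relation $\xi_1'^{m_1'}=\xi_2'^{-m_2'}$ is the monomial incarnation of $w_4$ and $w_5$ lying on opposite sides of the plane spanned by $w_2,w_3$, while $\xi_1'\tilde\eta_1=\xi_2'\tilde\eta_2$ records that the wall character is common to both charts. Finally, $\Y$ is obtained by forgetting the distinction between $\sigma_1$ and $\sigma_2$, i.e.\ passing to the single (non-simplicial) cone $\mathrm{Cone}(w_2,w_3,w_4,w_5)$; this matches $\Y$ being the base of the blow-down $\pi^+\colon\X^+\to\Y$, and the morphism $\pi^+$ becomes the toric morphism induced by the refinement of $\mathrm{Cone}(w_2,w_3,w_4,w_5)$ into $\sigma_1\cup\sigma_2$ by the wall $\mathrm{Cone}(w_2,w_3)$.

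I expect the main obstacle to be the weight bookkeeping in identifying $\sigma_1$ (and $\sigma_2$) with the correctly-normalized cyclic quotient $\tfrac{1}{m_i'}(1,-1,a_i')$: one must confirm not merely that the sublattice index is $m_i'$ but that the induced character weights on $(\xi_i',\tilde\eta_i,\zeta_i')$ come out as $(1,-1,a_i')$ with the precise residue $a_i'$ rather than some other representative. This is exactly where the specific values $c(k-1)$, $c(k)$ and the divisibility $\delta+m_2'a_1'\equiv 0\pmod{m_1'}$ must be invoked, and a sign or residue error there would misidentify the singularity type. The remaining toric verifications (index computations, the wall-sharing for the gluing, and the passage to $\Y$) are routine determinant and lattice calculations once the weights are correctly matched.
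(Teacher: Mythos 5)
Your per-chart analysis is essentially the paper's computation run in the opposite direction: the paper starts from the charts $V_i$ and the gluing, manufactures common dual coordinates $w_1^{\vee},w_2^{\vee},w_3^{\vee}$ in the character lattice, identifies $L_1^{\vee}=L_2^{\vee}=N^{\vee}$ (its Lemma \ref{lattice}), and then dualizes the cones; you instead verify directly in $N$ that $Cone(w_2,w_3,w_4)$ has index-$m_1'$ sublattice with $w_1$ acting with weights $(-1,c(k-1),1)\equiv(-1,a_1',1)\pmod{m_1'}$. That determinant-and-weights computation is correct, and both versions hinge on the same arithmetic input, namely $c(k-1)\equiv a_1'\pmod{m_1'}$ and $c(k)\equiv a_2'\pmod{m_2'}$; the paper quotes these as part of the construction of the flip in \cite{HTU}, whereas the identity $\delta+m_2'a_1'\equiv 0\pmod{m_1'}$ you invoke is not by itself the needed fact (it can be combined with Definition \ref{seq} to derive it, but that derivation has to be carried out).

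The genuine gap is in your gluing step. The gluing data you work with, $\xi_1'^{m_1'}=\xi_2'^{-m_2'}$ and $\xi_1'\tilde{\eta}_1=\xi_2'\tilde{\eta}_2$, is incomplete: a gluing of threefolds must also identify the third coordinates, and the missing relation ${\xi_1'}^{-c(k-1)}\zeta_1'={\xi_2'}^{-c(k)}\zeta_2'$ (inherited from the construction of $\mathbb{X}^+$, since the change of variables $f_i$ of the preceding lemma leaves $\zeta_i'$ untouched) is exactly the relation the paper encodes as $-c(k-1)e_1^{\vee}+e_3^{\vee}=-c(k)f_1^{\vee}+f_3^{\vee}$. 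This is not a cosmetic omission. The per-chart computation only sees $c(k-1)$ modulo $m_1'$ and $c(k)$ modulo $m_2'$: the lattice automorphism $w_1\mapsto w_1+jw_3$ (fixing $w_2,w_3$) shows that every integer in the residue class of $c(k-1)$ yields the same abstract chart. What pins down the actual pair of integers --- equivalently the relative position of $\sigma_1$ and $\sigma_2$, hence $\delta=-(m_1'c(k)+m_2'c(k-1))$ and the cone $Cone(w_2,w_3,w_4,w_5)$ describing $\Y$ --- is precisely the $\zeta$-relation on the overlap. Your proposed substitute, checking that $\sigma_1\cap\sigma_2=Cone(w_2,w_3)$ with $w_4,w_5$ on opposite sides, is a purely combinatorial statement satisfied by infinitely many non-isomorphic configurations; it is strictly weaker than verifying that the transition map defining $Z=V_1\cup V_2$ coincides with the canonical toric identification over that wall. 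As written, your argument can establish that $\X^+$ is covered by two charts with the correct singularity types $\frac{1}{m_i'}(1,-1,a_i')$, but not the claimed fan, and therefore not the description of $\Y$ either.
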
 

\begin{proof}

Say $U_1=Spec(\C[S_{\sigma_1}])$ for $\sigma_1\subset N_{\R}$ where $\sigma_1^{\vee}=Cone(e_1^{\vee},e_2^{\vee},e_3^{\vee})$ and $U_2=Spec(\C[S_{\sigma_2}])$ for $\sigma_2 \in N_{\R}$ where $\sigma_2^{\vee}=Cone(f_1^{\vee},f_2^{\vee},f_3^{\vee})$.

\begin{notation}
\label{exp}
Given an element $r_1e_1^{\vee}+r_2e_2^{\vee}+r_3e_3^{\vee}\in S_{\sigma_1}$, we denote the corresponding element of its semigroup algebra $\C[S_{\sigma_1}]$ as ${\xi_1'}^{r_1}\tilde{\eta_1}^{r_2}{\zeta_1'}^{r_3}$. Similarly, to an element $s_1f_1^{\vee}+s_2f_2^{\vee}+s_3f_3^{\vee}\in S_{\sigma_2}$ we associate the element ${\xi_2'}^{s_1}\tilde{\eta_2}^{s_2}{\zeta_2'}^{s_3}\in \C[S_{\sigma_2}]$.
\end{notation}

The lattices of $U_1$ and $U_2$ are given by 
$$L_1^{\vee}=\{ \sum b_i e_i^{\vee} \colon b_1-b_2+a_1'b_3 \text{ is divisible by } m_1' , b_i \in \Z \}$$ and $$L_2^{\vee}=\{ \sum d_i f_i^{\vee} \colon d_1-d_2+a_2'd_3 \text{ is divisible by } m_2' , d_i \in \Z \}$$ respectively and from the glueing relations we get 
\begin{eqnarray*}
m_1'e_1^{\vee} &=& -m_2'f_1^{\vee}\\ 
-c(k-1)e_1^{\vee}+e_3^{\vee} &=& -c(k)f_1^{\vee}+f_3^{\vee}\\ 
e_1^{\vee}+e_2^{\vee} &=& f_1^{\vee}+f_2^{\vee}.
\end{eqnarray*}

\noindent
Let $w_1^{\vee}=m_1'e_1^{\vee}=-m_2'f_1^{\vee}$, $w_2^{\vee}=e_1^{\vee}+e_2^{\vee}=f_1^{\vee}+f_2^{\vee}$, and  $w_3^{\vee}=e_3^{\vee}-c(k-1)e_1^{\vee}=f_3^{\vee}-c(k)f_1^{\vee}$. 

We write the $e_i^{\vee}$'s and the $f_i^{\vee}$'s  in terms of the $w_i^{\vee}$'s and we use this to rewrite the cones as
$$\sigma_1^{\vee}=Cone \bigg( \frac{1}{m_1'} w_1^{\vee}, w_2^{\vee}-\frac{1}{m_1'}w_1^{\vee}, \frac{c(k-1)}{m_1'}w_1^{\vee}+w_3^{\vee} \bigg )$$ and 
$$\sigma_2^{\vee}=Cone \bigg (-\frac{1}{m_2'} w_1^{\vee},\frac{1}{m_2'}w_1^{\vee}+w_2^{\vee},w_3^{\vee}-\frac{c(k)}{m_2'}w_1^{\vee} \bigg ).$$

By Lemma \ref{lattice}, the lattices $L_1^{\vee}$ and $L_2^{\vee}$ are both equal to the lattice $N^{\vee}=\langle w_1^{\vee},w_2^{\vee},w_3^{\vee} \rangle$. 
Let $$w_4=m_1'w_1+w_2-c(k-1)w_3 \text{ and } w_5=-m_2'w_1+w_2 -c(k)w_3)$$

Then the duals of these cones are given by  
$$\sigma_1=Cone(w_2, w_3, w_4) \text{ and } \sigma_2=Cone(w_2, w_3,w_5)$$ 
which concludes the proof of the lemma.
\end{proof}

\begin{lemma}
\label{lattice}
The lattices $L_1^{\vee}$ and $L_2^{\vee}$ are both equal to the lattice $N^{\vee}=\langle w_1^{\vee},w_2^{\vee},w_3^{\vee} \rangle$.
\end{lemma}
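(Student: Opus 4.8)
The plan is to prove the two equalities $L_1^{\vee}=N^{\vee}$ and $L_2^{\vee}=N^{\vee}$ separately by a direct computation, reducing each to a single congruence. I will carry out $L_1^{\vee}$ in detail; the argument for $L_2^{\vee}$ is identical after the change of basis $e_i^\vee\rightsquigarrow f_i^\vee$ and the sign in $w_1^{\vee}=-m_2'f_1^{\vee}$.

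First I would invert the defining relations of the $w_i^{\vee}$. From $w_1^{\vee}=m_1'e_1^{\vee}$, $w_2^{\vee}=e_1^{\vee}+e_2^{\vee}$ and $w_3^{\vee}=e_3^{\vee}-c(k-1)e_1^{\vee}$ I solve
$$e_1^{\vee}=\tfrac{1}{m_1'}w_1^{\vee},\qquad e_2^{\vee}=w_2^{\vee}-\tfrac{1}{m_1'}w_1^{\vee},\qquad e_3^{\vee}=w_3^{\vee}+\tfrac{c(k-1)}{m_1'}w_1^{\vee}.$$
Substituting into an arbitrary element $\sum_i b_i e_i^{\vee}$ of $L_1^{\vee}\subset\Z\langle e_1^{\vee},e_2^{\vee},e_3^{\vee}\rangle$ gives
$$\sum_i b_i e_i^{\vee}=\frac{b_1-b_2+c(k-1)b_3}{m_1'}\,w_1^{\vee}+b_2\,w_2^{\vee}+b_3\,w_3^{\vee}.$$
Since $w_2^{\vee},w_3^{\vee}$ already carry integer coefficients, this element lies in $N^{\vee}=\langle w_1^{\vee},w_2^{\vee},w_3^{\vee}\rangle$ if and only if $m_1'\mid(b_1-b_2+c(k-1)b_3)$. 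By definition, $\sum_i b_i e_i^{\vee}\in L_1^{\vee}$ if and only if $m_1'\mid(b_1-b_2+a_1'b_3)$. Hence the two lattices coincide precisely when $c(k-1)\equiv a_1'\pmod{m_1'}$, and the whole lemma is reduced to this congruence together with its analogue $c(k)\equiv a_2'\pmod{m_2'}$.

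The main point, and the only nontrivial step, is establishing $c(k-1)\equiv a_1'\pmod{m_1'}$, which I would obtain from the consistency of the two descriptions of the chart $U_1'\subset\X^+$. On the one hand the construction presents $U_1'=\A^3_{\xi_1',\eta_1',\zeta_1'}/\frac{1}{m_1'}(1,-1,a_1')$, so the residual $\Z_{m_1'}$ acts on $\zeta_1'$ with weight $a_1'$. On the other hand $U_1'=(x_2'\neq 0)/\Gamma'$: slicing by $x_2'=1$ identifies $U_1'$ with $\{x_2'=1\}/\mu_{m_1'}$, where $\mu_{m_1'}=\{(\gamma_1',1):\gamma_1'^{m_1'}=1\}$ is the stabilizer of the slice inside $\Gamma'$. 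Reading off the $\Gamma'$-action, this $\mu_{m_1'}$ acts on $(x_1',y_1',z)=(\xi_1',\eta_1',\zeta_1')$ with weights $(1,-1,c(k-1))$, since the $z$-weight of $(\gamma_1',1)$ is $\gamma_1'^{c(k-1)}\gamma_2'^{c(k)}=\gamma_1'^{c(k-1)}$. Comparing the two presentations forces $c(k-1)\equiv a_1'\pmod{m_1'}$. The symmetric computation on $U_2'=(x_1'\neq 0)/\Gamma'$, sliced by $x_1'=1$ with residual group $\mu_{m_2'}=\{(1,\gamma_2'):\gamma_2'^{m_2'}=1\}$, reads off the $z$-weight $c(k)$ and yields $c(k)\equiv a_2'\pmod{m_2'}$.

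Finally I would record $L_2^{\vee}$ in parallel: inverting $w_1^{\vee}=-m_2'f_1^{\vee}$, $w_2^{\vee}=f_1^{\vee}+f_2^{\vee}$, $w_3^{\vee}=f_3^{\vee}-c(k)f_1^{\vee}$ gives $\sum_i d_i f_i^{\vee}=\frac{-d_1+d_2-c(k)d_3}{m_2'}w_1^{\vee}+d_2w_2^{\vee}+d_3w_3^{\vee}$, so membership in $N^{\vee}$ is equivalent to $m_2'\mid(d_1-d_2+c(k)d_3)$, matching the defining condition $m_2'\mid(d_1-d_2+a_2'd_3)$ of $L_2^{\vee}$ exactly because $c(k)\equiv a_2'\pmod{m_2'}$. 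I expect the congruences to be the sole obstacle; the one technical care needed is to confirm that slicing $x_i'=1$ introduces no extra monomial twist in the coordinates, which it does not, because the relations of $W'$ express the eliminated variable $y_{3-i}'$ as a $\mu$-invariant function, so the weights $(1,-1,c(k-1))$ and $(1,-1,c(k))$ can be read off directly from the $\Gamma'$-action.
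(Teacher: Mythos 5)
Your proof is correct, and its computational core coincides with the paper's: you invert the same change of basis, obtain the same coefficient $\frac{b_1-b_2+c(k-1)b_3}{m_1'}$ of $w_1^{\vee}$, and reduce both equalities to the congruences $c(k-1)\equiv a_1'\pmod{m_1'}$ and $c(k)\equiv a_2'\pmod{m_2'}$. The difference is in how those congruences are handled. The paper treats them as definitional and simply cites \cite{HTU}: in the construction of the flip, $a_1'$ is \emph{defined} as the representative of $c(k-1)$ modulo $m_1'$ in $[0,m_1']$, and likewise $a_2'\equiv c(k)\pmod{m_2'}$, so a citation closes the proof. You instead re-derive the congruence from the internal consistency of the two descriptions of the chart $U_1'$ in Section 2.2: slicing $(x_2'\neq 0)$ along $x_2'=1$ leaves the residual group $\mu_{m_1'}=\{(\gamma_1',1):\gamma_1'^{m_1'}=1\}\subset\Gamma'$ acting with weights $(1,-1,c(k-1))$ on $(x_1',y_1',z)$, and comparing with the quoted presentation $\A^3/\frac{1}{m_1'}(1,-1,a_1')$ in the same coordinates forces $c(k-1)\equiv a_1'\pmod{m_1'}$; your verification that the eliminated variable $y_2'=z^{m_2'}x_1'^{\delta}+u_2'$ is invariant under this residual group (equivalently $m_2'c(k-1)+\delta\equiv 0\pmod{m_1'}$, which follows from $\delta=-(m_1'c(k)+m_2'c(k-1))$) is exactly the right check that no coordinate twist occurs. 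Your route buys self-containedness — no need to unwind the definition of $a_i'$ in \cite{HTU} — at the cost of relying on the fact that the presentation of $U_i'$ quoted in Section 2.2 is taken in the slice coordinates; this reading is the intended one (it is what makes the displayed gluing relations ${\xi_1'}^{-c(k-1)}\zeta_1'={\xi_2'}^{-c(k)}\zeta_2'$ meaningful), and it is precisely what the paper's citation delivers for free. Both arguments are sound and establish the lemma.
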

\begin{proof}
It is clear that $N^{\vee} \subset L_1^{\vee}$ and $N^{\vee} \subset L_2^{\vee}$. 
Let $b_1e_1^{\vee}+b_2e_2^{\vee}+b_3e_3^{\vee} \in L_1^{\vee}$. 
Then 
$$b_1e_1^{\vee}+b_2e_2^{\vee}+b_3e_3^{\vee}= \bigg (\frac{b_1-b_2+c(k-1)b_3}{m_1'} \bigg )  w_1^{\vee}+b_2w_2^{\vee}+b_3w_3^{\vee}.$$
But this element is in $L_1^{\vee}$ so $b_1-b_2+a_1'b_3=m_1'l$ for some $l\in \Z$. In the construction of the flip in \cite{HTU}, $a_1'$ is defined as a number $0\leq a_1' \leq m_1'$ with $a_1 \equiv c(k-1) \text{ mod } m_1'$. Then the coefficient of $w_1^{\vee}$ in the above expression in an integer and $L_1^{\vee}=\{ \sum b_iw_i^{\vee} | b_i\in \Z \}=N^{\vee}$. 
\vspace{0.1cm}

\noindent
Similarly, if $d_1f_1^{\vee}+d_2f_2^{\vee}+d_3f_3^{\vee} \in L_2^{\vee}$ then writing 
we get 
$$d_1f_1^{\vee}+d_2f_2^{\vee}+d_3f_3^{\vee} = -\bigg ( \frac{d_1-d_2+c(k)d_3}{m_2'} \bigg ) w_1^{\vee} +d_2w_2^{\vee}+d_3w_3^{\vee}$$
But $d_1-d_2+a_2'd_3$ is divisible by $m_2'$ and by definition $a_2' \equiv c(k) \text{ mod } m_2'$. Therefore the coefficient of $w_1^{\vee}$ is an integer and $L_2^{\vee}=N^{\vee}$. 
\end{proof}

Note that the fan of $\X^+$ is obtained by subdividing the fan of $\Y$. There is only one more possible subdivision of the fan of $\Y$ given by $\sigma_3=Cone(w_2,w_4,w_5)$ and $\sigma_4=Cone(w_3,w_4,w_5)$. We will prove that these cones correspond to the two charts of the antiflip. 
\vspace{0.2cm}

We know that a toric variety given by a simplicial fan has only quotient singularities, and for each maximal cone $\sigma$, the affine open $U_{\sigma}$ is the quotient of $\C^n$ by the action of the finite abelian group $G=N/N'$ where $N'$ is the lattice obtained by the primitive generators of $\sigma$. Let $M, M'$ be the duals of $N$ and $N'$ respectively. Then we have an action of $G$ on $\C[M']$, determined by the canonical pairing

$$ M'/M \times N/N' \to \C^n$$
and given by
$$v(X^{u'})=exp(2\pi i \langle u',v\rangle )X^{u'}$$
for $v\in N$, $u' \in M'$. Then we consider the lattices 
\begin{eqnarray*}
L_3 & = & \langle  w_2,w_4,w_5\rangle \\
                 & = & \langle w_2,m_1'w_1-c(k-1)w_3, -m_2'w_1-c(k)w_3 \rangle  \subset N              
\end{eqnarray*}

and 

\begin{eqnarray*}
L_4 & = & \langle w_3,w_4, w_5 \rangle \\
                   & = & \langle w_3,m_1'w_1+w_2, -m_2'w_1+w_2  \rangle \subset N.
\end{eqnarray*}

\noindent
Let $G=N/L_3$ and $H=N/L_4$, then we have that $U_{\sigma_3}=\C^3/G$ and $U_{\sigma_4}=\C^3/H$.

Note that 

\[
\begin{bmatrix}
w_2\\
w_4\\
w_5
\end{bmatrix}=
\begin{bmatrix}
1 & 0 & 0\\
0 & m_1' & -c(k-1)\\
0 & -m_2' & -c(k)
\end{bmatrix} 
\begin{bmatrix}
w_2\\
w_1\\
w_3
\end{bmatrix}
\]
To determine $G$ we find the Smith normal form of this matrix. Recall that $\delta=-(m_1'c(k)+m_2'c(k-1))$. By definition, we have that $$c(k-1) \equiv a_1' \text{ (mod } m_1') \text{ and } gcd(a_1',m_1')=1.$$ We can find integers $r,s$ such that $rm_1'-sc(k-1)=1$. Let $\rho=rm_2'+sc(k)$, then we obtain the Smith normal form of the matrix by multiplying by the following invertible matrices

\[
\begin{bmatrix}
1 & 0 & 0\\
0 & 1 & 0\\
0 & \rho & 1
\end{bmatrix} 
\begin{bmatrix}
1 & 0 & 0\\
0 & m_1' & -c(k-1)\\
0 & -m_2' & -c(k)
\end{bmatrix} 
\begin{bmatrix}
1 & 0 & 0\\
0 & r  & c(k-1) \\
0 & s & m_1' 
\end{bmatrix} =
\begin{bmatrix}
1 & 0 & 0\\
0 & 1 & 0\\
0 & 0 & \delta
\end{bmatrix}
\]
so $G=\Z_{\delta}$ and is generated by the element $-sw_1+rw_3$. Now $L_3^{\vee} = \langle w_2^{\vee},w_6^{\vee},w_7^{\vee} \rangle$ where 
\begin{eqnarray*}
w_6^{\vee} &=& -\frac{c(k)}{\delta}w_1^{\vee}+\frac{m_2'}{\delta}w_3^{\vee} \\
w_7^{\vee} &=& \frac{c(k-1)}{\delta}w_1^{\vee}+\frac{m_1'}{\delta}w_3^{\vee} \\
\end{eqnarray*}
Now $\sigma_3^{\vee}=Cone(w_{10}^{\vee},w_6^{\vee}, w_7^{\vee})$ where 
$w_{10}^{\vee}= w_2^{\vee} -w_6^{\vee}-w_7^{\vee}$. Then $U_{\sigma_3}=\C^3/\Z_{\delta}$, the pairing is  given by 
$$\langle -sw_1+rw_3, w_{10}^{\vee} \rangle =\frac{-\rho-1}{\delta} \text{ , } \langle -sw_1+rw_3, w_6^{\vee} \rangle =\frac{\rho}{\delta} \text{ , } \langle -sw_1+rw_3, w_7^{\vee} \rangle =\frac{1}{\delta}$$
and then the weights are $-\rho-1, \rho, 1$.\\
For the other chart, note that 

\[
\begin{bmatrix}
w_3\\
w_4\\
w_5
\end{bmatrix}=
\begin{bmatrix}
1 & 0 & 0\\
0 & m_1' & 1\\
0 & -m_2' & 1
\end{bmatrix} 
\begin{bmatrix}
w_3\\
w_1\\
w_2
\end{bmatrix}
\]
To determine $H$ we find the Smith normal form of this matrix. Let $F=m_1'+m_2'$. Then we obtain the Smith normal form of the matrix by multiplying by the following invertible matrices

\[
\begin{bmatrix}
1 & 0 & 0\\
0 & 1 & 0\\
0 & 1 & -1
\end{bmatrix} 
\begin{bmatrix}
1 & 0 & 0\\
0 & m_1' & 1\\
0 & -m_2' & 1
\end{bmatrix} 
\begin{bmatrix}
1 & 0 & 0\\
0 & 0  & 1 \\
0 & 1 & -m_1' 
\end{bmatrix} =
\begin{bmatrix}
1 & 0 & 0\\
0 & 1 & 0\\
0 & 0 & F
\end{bmatrix}
\]
so $H=\Z_F$ and is generated by the element $w_1$.
Now $L_4^{\vee} = \langle w_3^{\vee}, w_8^{\vee}, w_9^{\vee} \rangle$ where 
\begin{eqnarray*}
w_8^{\vee} &=& \frac{1}{F} w_1^{\vee} + \frac{m_2'}{F}w_2^{\vee}  \\
w_9^{\vee} &=& -\frac{1}{F}w_1^{\vee}+\frac{m_1'}{F}w_2^{\vee}
\end{eqnarray*}
and $\sigma_4^{\vee}=Cone(w_{11}^{\vee},w_8^{\vee},w_9^{\vee})$ where
$$w_{11}^{\vee}= w_3^{\vee}+c(k-1) \bigg (\frac{1}{F}w_1^{\vee}+\frac{m_2'}{F}w_2^{\vee} \bigg ) +c(k) \bigg (-\frac{1}{F} w_1^{\vee}+\frac{m_1'}{F}w_2^{\vee} \bigg )$$
Then $U_{\sigma_4}=\C^3/\Z_F$, the pairing is  given by 
$$\langle w_1, w_{11}^{\vee} \rangle =\frac{\lambda}{F} \text{ , } \langle w_1, w_8^{\vee} \rangle =\frac{1}{F} \text{ , } \langle w_1, w_9^{\vee} \rangle =\frac{-1}{F}$$
and then the weights are $\lambda, 1, -1$, where $\lambda=c(k-1)-c(k)$. This concludes the proof of Theorem \ref{diagonal}.

\vspace{0.1cm}

\begin{corollary}
\label{cor rnc}
If $\X^+$ is the one parameter deformation of the minimal resolution of the rational normal curve of degree $n\geq 3$ with equal axial multiplicities, then the antiflip is $\X^-=W_1\cup W_2$ where $W_1=\frac{1}{n-2}(-2,1,1)$ and 
\begin{equation*}
  W_2 = \left\{
    \begin{array}{rl}
      \frac{1}{2}(0,1,1) & \text{if } n \text{ is even},\\
      \\
      \frac{1}{2}(1,1,1) & \text{if } n \text{ is odd}
    \end{array} \right.
\end{equation*}
\end{corollary}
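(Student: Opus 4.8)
The plan is to specialize the formulas of Theorem \ref{diagonal} (and the explicit values of $\rho$ and $\lambda$ extracted in its proof) to the numerical data of the cone over the rational normal curve of degree $n$. First I would record the input: this cone is the cyclic quotient singularity $\frac{1}{n}(1,1)$, whose minimal resolution $X^+$ has a single exceptional $(-n)$-curve $C^+\simeq \PP^1$ meeting two smooth points, so in the paper's notation $m_1'=m_2'=a_1'=a_2'=1$ and $c=n$ (here $X^+$ is already the minimal resolution, so the proper transform of $C^+$ is the $(-n)$-curve itself). From $\delta=cm_1'm_2'-m_1'a_2'-m_2'a_1'$ and $F=m_1'+m_2'$ I immediately get $\delta=n-2$ and $F=2$, which pins down the two moduli appearing in $W_1=\frac{1}{\delta}(-\rho-1,\rho,1)$ and $W_2=\frac{1}{F}(\lambda,1,-1)$.

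Next I would run the combinatorial recursions that produce $\rho$ and $\lambda$. Using the definitions $m_2=m_1'=1$, $a_2=1$ (the ``otherwise'' branch, since $m_1'=a_1'$), $m_1=\delta m_1'+m_2'=n-1$ and $a_1=(\delta+m_1m_2-a_2m_1)/m_2=n-2$, the initial $k2A$ data is $(m_1,a_1,m_2,a_2)=(n-1,n-2,1,1)$. Feeding this into Definition \ref{seq}, the sequence $d$ is $d(1)=n-1$, $d(2)=1$, $d(3)=-1$, so $k=3$; and the sequence $c$ is $c(1)=n-2$, $c(2)=0$, $c(3)=2-n$, giving $c(k-1)=c(2)=0$ and $c(k)=c(3)=2-n$. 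Then $\lambda=c(k-1)-c(k)=n-2$, and solving $rm_1'-sc(k-1)=1$ (which reads $r=1$ since $c(k-1)=0$) yields $\rho=rm_2'+sc(k)=1$.

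Plugging these into Theorem \ref{diagonal} gives $W_1=\frac{1}{n-2}(-2,1,1)$ and $W_2=\frac{1}{2}(n-2,1,-1)$. The final step is a reduction modulo $F=2$: since $-1\equiv 1$ and $n-2\equiv n \pmod 2$, we have $W_2=\frac{1}{2}(0,1,1)$ when $n$ is even and $W_2=\frac{1}{2}(1,1,1)$ when $n$ is odd, which is exactly the asserted dichotomy.

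The computation is essentially bookkeeping, so the one point that needs genuine care is the well-definedness of $\rho$: because $c(k-1)=0$ the equation $rm_1'-sc(k-1)=1$ leaves $s$ free, so a priori $\rho=1+s(2-n)$ is ambiguous. I would observe that changing $s$ by $1$ changes $\rho$ by $-(n-2)=-\delta$, hence leaves the weight vector $(-\rho-1,\rho,1)$ unchanged modulo $\delta$ and so does not affect the isomorphism class of $W_1$; the representative $s=0$ produces the clean form $W_1=\frac{1}{n-2}(-2,1,1)$. As a consistency check, for $n=4$ both charts degenerate to $\frac{1}{2}(0,1,1)$, the transversal $A_1$ singularity of Proposition \ref{rnc4}.
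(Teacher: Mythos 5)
Your proof is correct and takes essentially the same route as the paper: both specialize Theorem \ref{diagonal} to the data $m_1'=m_2'=a_1'=a_2'=1$, $c=n$, which gives $\delta=n-2$, $F=2$, $c(k-1)=0$, $c(k)=-\delta$, and hence $\lambda=n-2$ and $\rho=1$. Your extra observation that the freedom in choosing $s$ (since $c(k-1)=0$) only shifts $\rho$ by multiples of $\delta$ and so does not affect $W_1$ is a useful detail the paper leaves implicit, but it does not change the argument.
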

\begin{proof}
Follows directly from the theorem since in this case we have  $m_1'=m_2'=a_1'=a_2'=1$, $\delta=n-2$, $c(k-1)=0$ and $c(k)=-\delta$.
\end{proof}

\section{Special fiber of the antiflip with $\delta\geq 3$}

\begin{lemma}
In the notation of Theorem \ref{diagonal}, the special fiber $X^-=S_1 \cup S_2$ where $$S_1=(X_1Y_1Z_1=Y_1^{\delta}+Z_1^{\delta})\subset \A^3/\frac{1}{\delta}(-\rho -1,\rho,1),$$ $$S_2=(Y_2Z_2=X_2^{m_1'}Z_2^{\delta} + X_2^{m_2'}Y_2^{\delta})\subset \A^3 /\frac{1}{F}(\lambda,1,-1)$$ and the gluing is given by $$X_1^{\delta}= X_2^{-F} \text{ ,  }Y_1^{\delta}=X_2^{m_2'}Y_2^{\delta} \text{ ,  } Z_1^{\delta}=X_2^{m_1'}Z_2^{\delta}.$$
\end{lemma}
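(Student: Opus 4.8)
The plan is to realize the special fiber $X^-$ as the zero locus of the one-parameter deformation variable $u$ inside the two affine toric charts $W_1 = U_{\sigma_3}$ and $W_2 = U_{\sigma_4}$ of $\X^-$ produced in the proof of Theorem \ref{diagonal}. Since $\X^-$ is the antiflip, it is a family over $\A^1_u$ isomorphic in codimension one to $\X^+$, so the deformation parameter $u$ of Definition \ref{axial} pulls back to a regular function on $\X^-$ and $X^- = \{u=0\}$. The whole argument is thus a matter of writing $u$ in the toric coordinates of each chart.

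First I would express $u$ intrinsically as a combination of characters. On the chart $U_1'$ one has $\xi_1'\eta_1' = \zeta_1'^{m_1'}+u$, and from the isomorphism $f_1$ of the lemma showing $\X^+$ is toric, $q_1 = \xi_1'\tilde{\eta_1} = \zeta_1'^{m_1'}+\xi_1'^{\delta}\zeta_1'^{m_2'}+u$; hence $u = q_1 - \zeta_1'^{m_1'} - \xi_1'^{\delta}\zeta_1'^{m_2'}$. Using the monomial dictionary of Notation \ref{exp} and the expressions for the $e_i^{\vee}$ in terms of the $w_j^{\vee}$ from the lemma computing the fan of $\X^+$, together with $\delta = -(m_1'c(k)+m_2'c(k-1))$, these three monomials correspond to $w_2^{\vee}$, $\delta w_7^{\vee}$ and $\delta w_6^{\vee}$, so that $u = \chi^{w_2^{\vee}} - \chi^{\delta w_7^{\vee}} - \chi^{\delta w_6^{\vee}}$. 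The symmetric computation with $f_2$ yields the same expression (consistent with $q_1 = q_2$).

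Next I would restrict to each chart. In $W_1$, with $X_1 = \chi^{w_{10}^{\vee}}$, $Y_1 = \chi^{w_6^{\vee}}$, $Z_1 = \chi^{w_7^{\vee}}$ and $w_{10}^{\vee} = w_2^{\vee} - w_6^{\vee} - w_7^{\vee}$, the three characters read $X_1Y_1Z_1$, $Z_1^{\delta}$, $Y_1^{\delta}$, giving $u = X_1Y_1Z_1 - Y_1^{\delta} - Z_1^{\delta}$ and hence $X^- \cap W_1 = S_1$. In $W_2$, with $X_2 = \chi^{w_{11}^{\vee}}$, $Y_2 = \chi^{w_8^{\vee}}$, $Z_2 = \chi^{w_9^{\vee}}$, I would rewrite the same three characters in the basis dual to $\sigma_4$ via $w_2^{\vee} = w_8^{\vee}+w_9^{\vee}$, $\delta w_6^{\vee} = \delta w_8^{\vee}+m_2'w_{11}^{\vee}$ and $\delta w_7^{\vee} = \delta w_9^{\vee}+m_1'w_{11}^{\vee}$ (these follow from the formulas for $w_8^{\vee},w_9^{\vee},w_{11}^{\vee}$ in the proof of Theorem \ref{diagonal} and the identities $F = m_1'+m_2'$, $\lambda = c(k-1)-c(k)$), obtaining $u = Y_2Z_2 - X_2^{m_1'}Z_2^{\delta} - X_2^{m_2'}Y_2^{\delta}$ and $X^- \cap W_2 = S_2$. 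Each equation is invariant under the respective $\Z_{\delta}$ and $\Z_F$ actions, which is immediate from the weights $(-\rho-1,\rho,1)$ and $(\lambda,1,-1)$.

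For the gluing, since $\sigma_3$ and $\sigma_4$ share the face $\mathrm{Cone}(w_4,w_5)$, the charts $W_1$ and $W_2$ are identified along it by matching characters. I would verify the lattice identities $\delta w_{10}^{\vee} = -Fw_{11}^{\vee}$, $\delta w_6^{\vee} = \delta w_8^{\vee}+m_2'w_{11}^{\vee}$ and $\delta w_7^{\vee} = \delta w_9^{\vee}+m_1'w_{11}^{\vee}$ in $N^{\vee}$, which translate precisely into $X_1^{\delta} = X_2^{-F}$, $Y_1^{\delta} = X_2^{m_2'}Y_2^{\delta}$ and $Z_1^{\delta} = X_2^{m_1'}Z_2^{\delta}$, as claimed. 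The only real obstacle is the bookkeeping: one must keep the continued-fraction quantities $c(k-1)$, $c(k)$, $a_i'$, $m_i'$ straight and repeatedly invoke the single numerical identity $\delta = -(m_1'c(k)+m_2'c(k-1))$ (with $F = m_1'+m_2'$ and $\lambda = c(k-1)-c(k)$) to collapse the cross terms. Once $u$ is written as a character combination, the rest is a direct translation through the dual-lattice dictionary already set up in the proof of Theorem \ref{diagonal}.
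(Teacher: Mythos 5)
Your proposal is correct and follows essentially the same route as the paper: the paper's proof likewise expresses the deformation parameter $u$ as a combination of characters (namely $u=\xi_1'\tilde{\eta_1}-{\zeta_1'}^{m_1'}-{\zeta_2'}^{m_2'}$, corresponding to $\chi^{w_2^{\vee}}-\chi^{\delta w_7^{\vee}}-\chi^{\delta w_6^{\vee}}$) and translates it through the monomial dictionary into $X_1Y_1Z_1-Y_1^{\delta}-Z_1^{\delta}$ on $U_{\sigma_3}$ and $Y_2Z_2-X_2^{m_1'}Z_2^{\delta}-X_2^{m_2'}Y_2^{\delta}$ on $U_{\sigma_4}$. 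Your version is merely more explicit, spelling out the lattice identities (e.g. $\delta w_{10}^{\vee}=-Fw_{11}^{\vee}$) that give the gluing relations, which the paper leaves implicit in the toric structure.
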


\begin{proof}
To a linear combination $r_1w_{10}^{\vee}+r_2w_6^{\vee}+r_3w_7^{\vee} \in S_{\sigma_3}$ we associate the element $X_1^{r_1}Y_1^{r_2}Z_1^{r_3} \in \C[S_{\sigma_3}]$, and to a linear combination $r_1w_{11}^{\vee}+r_2w_8^{\vee}+r_3w_9^{\vee} \in S_{\sigma_4}$ we associate the element $X_2^{r_1}Y_2^{r_2}Z_2^{r_3} \in \C[S_{\sigma_4}]$ (see proof of Theorem \ref{diagonal} for the definition of the $w_i^{\vee}$) .

From the equations of $\X^+$ in Section 2.2, we have  $u=\xi_1' \tilde{\eta_1} - {\zeta_1'}^{m_1'} - {\zeta_2'}^{m_2'} \in \C[S_{\sigma_1}]$ which becomes $$u=X_1Y_1Z_1-Y_1^{\delta}-Z_1^{\delta} \in \C[S_{\sigma_3}]$$ or $$u=Y_2Z_2 - X_2^{m_1'}Z_2^{\delta} - X_2^{m_2'}Y_2^{\delta} \in \C[S_{\sigma_4}]$$
\end{proof}

\noindent
Let ${(X^-)}^{\nu},S_1^{\nu}$ and $S_2^{\nu}$ be the normalization of $X^-,S_1$ and $S_2$. Then $(X^-)^{\nu}=S_1^{\nu} \cup S_2^{\nu}$.
Let $$T_1=(X_1Y_1Z_1=Y_1^{\delta}+Z_1^{\delta})\subset \A^3$$
and $$T_2=(Y_2Z_2=X_2^{m_2'}Y_2^{\delta}+X_2^{m_1'}Z_1^{\delta})\subset \A^3.$$
Let $T_1^{\nu}$, $T_2^{\nu}$ be their normalizations, then $S_1^{\nu}$ is obtained by taking the quotient of $T_1^{\nu}$ by the $\Z_{\delta}$ action and $S_2^{\nu}$ is obtained by taking the quotient of $T_2^{\nu}$ by the $\Z_F$ action.

\begin{proof}[Proof of Theorem \ref{specialfiber}]
Let $$w=\frac{-Z_1^{\delta-1}+X_1Y_1}{Y_1}.$$
Note that $w$ is an integral element in $\C [X_1,Y_1,Z_1]$ since it satisfies the monic equation $w^2-wX_1+Y_1^{\delta-2}Z_1^{\delta-2}=0$. We will prove that $T_1^{\nu}$ is  the spectrum of  $$\C[X_1,Y_1,Z_1,w]/(wZ_1-Y_1^{\delta-1}, WY_1+Z_1^{\delta-1}-X_1Y_1, w^2-wX_1+Y_1^{\delta-2}Z_1^{\delta-2}).$$
\noindent
First we apply the following change of coordinates:  $X_1\mapsto \tilde{X}_1=w-X_1$ to get 
$$\C[\tilde{X}_1,Y_1,Z_1,w]/(wZ_1-Y_1^{\delta-1}, \tilde{X}_1Y_1+Z_1^{\delta-1}, \tilde{X}_1w+Y_1^{\delta-2}Z_1^{\delta-2}).$$
It is known that for a cyclic quotient singularity $\frac{1}{r}(1,a)$, where $a$ and $r$ are coprime, the invariant monimials are given by $u_0=p^r$, $u_1= p^{r-a}q$, $\ldots$ , $u_k=q^r$ and they satisfy the relations
$$u_{i-1}u_{i+1}=u_i^{a_i} \text{  for } i=1,\ldots, k$$
where the $a_i$ come from the continued fraction of $\frac{r}{r-a}=[a_1,\ldots , a_k]$. Consider the case when $r=\delta(\delta-2)$ and $a=(\delta-2)(\delta-1)-1$. Then we get invariants $u_0=p^{\delta(\delta-2)}$, $u_1=p^{\delta-1}q$, $u_2=pq^{\delta-1}, u_3=q^{\delta(\delta-2)}$ and they satisfy the relations 
$$u_0u_2=u_1^{\delta-1}, u_1u_3=u_2^{\delta-1}, u_0u_3=u_1^{\delta-2}u_2^{\delta-2}.$$
Notice that these equations are the same equations obtained above under the identification $$u_0 \mapsto \tilde{X}, u_1\mapsto Z_1, u_2 \mapsto Y_1 \text{ and } u_3 \mapsto w$$
Therefore the surface given by the Spec of 
$$\C[\tilde{X}_1,Y_1,Z_1,w]/(wZ_1-Y_1^{\delta-1}, \tilde{X}_1Y_1+Z_1^{\delta-1}, \tilde{X}_1w+Y_1^{\delta-2}Z_1^{\delta-2})$$

\noindent
is isomorphic to $\frac{1}{\delta(\delta-2)}(1,(\delta-2)(\delta-1)-1)$. Since it is a toric variety, then in particular it is normal so we conclude that it is the normalization of $T_1$, i.e., $T_1^{\nu}$. 
Now $\Z_{\delta}$ acts on $(\tilde{X},Y_1,Z_1,w)$ with weights $(-\rho -1,\rho,1,-\rho -1)$ and we obtain $S_1^{\nu}$ by taking the quotient of $T_1^{\nu}$ under this action. To do this, we use the previous identification $$\tilde{X}=p^{\delta(\delta-2)}, Z_1=p^{\delta-1}q, Y_1=pq^{\delta-1} \text{ and } w=q^{\delta(\delta-2)}$$ 
Note that $\Z_{\delta}$ acts monomially on $T_1^{\nu}$ so the quotient will be again a toric variety and if $L_{T_1^{\nu}}$  is the lattice of $T_1^{\nu}$ , then $L_{T_1^{\nu}}$ is a sublattice of $\Z^2_{p,q}$. Then we can look for invariant monomials in $L_{T_1^{\nu}}$. It is enough to find two subsequent invariant monomials, as it is known that these will generate the lattice for $S_1^{\nu}$. Note that $w^{\delta}=q^{\delta^2 (\delta-2)}$ is invariant. If we find an invariant monomial of the form $w^tY_1=pq^{t\delta(\delta-2)+(\delta-1)}$ then these two points of $L_{T_1^{\nu}}$ would form a basis of the lattice of $S_1^{\nu}$. Now $w^tY_1$ is invariant if and only if $$t(-\rho -1)+\rho \equiv 0 \text{ (mod $\delta$)}.$$ 
If $gcd(\rho+1,\delta)=1$ then the congruence has a unique solution and we conclude that $$S_1^{\nu} \simeq \frac{1}{\delta^2(\delta-2)}(1,\delta^2(\delta-2)-t\delta(\delta-2)-\delta +1).$$
\vspace{0.2cm}

\noindent
If $gcd(\rho+1,\delta)=d>1$, then $L_{T_1^{\nu}}$ is a sublattice of $\Z^2_{p^d,q^d}$. Note that $w^{\delta/d}$ is invariant and so we need to find an invariant monomial of the form $w^tY_1^d=p^dq^{t\delta(\delta-2)+d(\delta-1)}$. Now this monomial is invariant if and only if
$$t(-\rho-1)+d\rho \equiv 0 \text{(mod $\delta$)}$$
Say $\rho+1=dh$ and $\delta=dj$, then the previous congruence is equivalent to 
$$-th+\rho \equiv 0 \text{(mod $\delta$)}$$
which has a unique solution and we conclude that 
 $$S_1^{\nu} \simeq \frac{1}{j\delta(\delta-2)}(1,j\delta(\delta-2)-t\delta(\delta-2)-d(\delta -1)).$$
\vspace{0.2cm}

\noindent
For the other chart, first note that at the origin the tangent cone is given by $$T_{(0,0,0)}=(Y_2Z_2=0).$$
Then at the origin $T_2$ is analytically isomorphic to its tangent cone, thus its quotient by the $\Z_F$ action at the origin will be analytically isomorphic to the quotient  $$(Y_2Z_2=0)\subset \frac{1}{F}(\lambda,1,-1)$$ which is an orbifold normal crossing.


\vspace{0.2cm}

\noindent
Along $C^-=(Y_2=Z_2=0)$, if $X_2\neq 0$, then the $\Z_F$ action is free and therefore at these points the surface has the same singularities as the corresponding points of $S_2^{\nu}$, namely two transversal branches.
\vspace{0.2cm}

\noindent
Away from $C^-$, if $X_2\neq 0$ then the action is free and therefore these are smooth points. Away from $C^-$, if $X_2=0$ then the action is also free so the points of the form $(0,Y_2,0)$ and $(0,0,Z_2)$ are smooth.
\end{proof}

\section{Special fiber of the antiflip with $\delta=2$}
From now on let $\delta=2$. So
$$T_1=(X_1Y_1Z_1=Y_1^2+Z_1^2)\subset \A^3$$ and  $$T_2=(Y_2Z_2=X_2^{m_2'}Y_2^2+X_2^{m_1'}Z_2^2)\subset \A^3.$$ Then $S_1^{\nu}$ and $S_2^{\nu}$ are obtained by taking the quotient of $T_1^{\nu}$ and $T_2^{\nu}$  by the $\Z_2$ and the $\Z_{F}$ actions respectively. 

\begin{proposition}
$S_1$ has two pinch points on $C^-=(Y_1=Z_1=0)$ and it has normal crossings elsewhere along this line. 
\end{proposition}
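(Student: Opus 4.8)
The plan is to settle this by an analytic-local computation along $C^-$, carried out first on the double cover $T_1=(X_1Y_1Z_1=Y_1^2+Z_1^2)\subset\A^3$ and then on the quotient $S_1=T_1/\Z_2$; since being a pinch point or a normal crossing is a local analytic property, it is enough to describe $S_1$ near each point of $C^-$. First I would identify the singular locus of the cover: a Jacobian computation for $f=X_1Y_1Z_1-Y_1^2-Z_1^2$ gives $\mathrm{Sing}(T_1)=C^-=(Y_1=Z_1=0)$. To read off the transverse type I apply the linear change $Y_1=P+Q$, $Z_1=P-Q$, which turns the equation into $(X_1-2)P^2=(X_1+2)Q^2$. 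Along $C^-$ the leading quadratic form in $(P,Q)$ is nondegenerate, hence factors into two distinct linear forms, exactly when $X_1\neq\pm2$, so there $T_1$ is two smooth sheets meeting transversally — a normal crossing; at $X_1=\pm2$ one of the factors $X_1\mp2$ vanishes and, absorbing the surviving unit into a coordinate, the equation takes Whitney-umbrella form $Q^2=\tilde X_1 P^2$ (resp. $P^2=\tilde X_1 Q^2$), a pinch point. Thus $T_1$ has exactly two pinch points, at $X_1=\pm2$, and normal crossings elsewhere.

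Next I would descend to $S_1$. The generator of $\Z_2$ acts by $(X_1,Y_1,Z_1)\mapsto((-1)^{-\rho-1}X_1,(-1)^{\rho}Y_1,-Z_1)$, and since $\X^-$ is singular along $C^-$ with transverse $A_1$ — equivalently $C^-$ lies in the singular locus of $W_1=\frac{1}{2}(-\rho-1,\rho,1)$ — the weight $-\rho-1$ is even (this can also be checked directly from the formula for $\rho$). Hence the involution fixes $C^-$ pointwise and is free off $C^-$, so in the coordinates above it is $(X_1,P,Q)\mapsto(X_1,-P,-Q)$; in particular the images of $X_1=\pm2$ stay distinct. Writing $b=P^2$, $c=PQ$, $d=Q^2$ for the invariants (so $W_1=\{bd=c^2\}$), the invariant equation of $T_1$ is $(X_1-2)b=(X_1+2)d$, and eliminating $d$ (or, near $X_1=-2$, the variable $b$) presents $S_1$ near any point of $C^-$ as the hypersurface $c^2=\frac{X_1-2}{X_1+2}\,b^2$ inside the smooth $\A^3_{X_1,b,c}$. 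For $X_1\neq\pm2$ the coefficient is a unit, admits an analytic square root, and the hypersurface is two smooth sheets crossing along $C^-=(b=c=0)$, a normal crossing; at $X_1=\pm2$ the coefficient vanishes to order one and, after absorbing the unit into a coordinate, the equation is the Whitney umbrella $c^2=\tilde X_1 b^2$, a pinch point.

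I expect the only genuine subtlety to be this last step: verifying that the $\Z_2$-quotient of a Whitney umbrella by $(P,Q)\mapsto(-P,-Q)$ is again a Whitney umbrella rather than a worse singularity, and that the action fixes $C^-$ pointwise so the two pinch points are not identified. Both are handled by the invariant-ring computation above, whose useful feature is that it realizes $S_1$ directly inside a smooth three-dimensional chart and so avoids the normalization of $T_1$. Assembling the three local models then yields exactly two pinch points, at the images of $X_1=\pm2$, and normal crossings along the remainder of $C^-$, as asserted.
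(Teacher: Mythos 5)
Your local computations are correct, and in the case you treat they are actually more complete than the paper's own argument: the factorization of $T_1$ via $Y_1=P+Q$, $Z_1=P-Q$ into $(X_1-2)P^2=(X_1+2)Q^2$, and the invariant-ring presentation $c^2=\tfrac{X_1-2}{X_1+2}\,b^2$ of the quotient, correctly show that when the involution is $\tfrac12(0,1,1)$ the images of $(\pm2,0,0)$ remain two distinct pinch points and the rest of $C^-$ is normal crossings (the paper merely asserts this in that case). The genuine gap is your claim that the weight $-\rho-1$ is always even, i.e.\ that $\rho$ is always odd. Your primary justification --- that $\X^-$ is singular along $C^-$ with transverse $A_1$ --- is circular in the logical structure of the paper: that statement is part of Theorem \ref{delta2}, whose proof rests on the present proposition; it is not available here. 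Your fallback, that the parity ``can also be checked directly from the formula for $\rho$,'' is true but is never carried out, and it is not a one-line check: one needs Lemma \ref{cases} (which forces $m_1'\equiv m_2'\pmod 2$), the congruences $c(k-1)\equiv a_1'\pmod{m_1'}$ and $c(k)\equiv a_2'\pmod{m_2'}$, the identity $m_1'c(k)+m_2'c(k-1)=-\delta=-2$, and $\gcd(m_i',a_i')=1$. With these: if both $m_i'$ are odd, then $r\equiv 1+sc(k-1)$ and $c(k)\equiv c(k-1)\pmod2$, so $\rho=rm_2'+sc(k)\equiv 1+s\bigl(c(k-1)+c(k)\bigr)\equiv 1\pmod 2$; if both $m_i'$ are even (this case really occurs, e.g.\ $(m_1',a_1')=(2,1)$, $(m_2',a_2')=(4,1)$ from the table entry $f=3$), then $s$ and $c(k-1)$ are odd by B\'ezout, $c(k)\equiv a_2'\equiv 1\pmod 2$, so again $\rho$ is odd.

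Why the gap matters: if $\rho$ were even, the action would be $\tfrac12(1,0,1)$, which interchanges the two pinch points of $T_1$ and fixes only the origin of $C^-$; your analysis would then say nothing about the image of the origin, and the proposition would be unproved in that case. The paper sidesteps the parity question entirely by treating both parities: for $\rho$ even it carries out the invariant computation in $u=X_1^2$, $v=X_1Z_1$, $w=Z_1^2$ and finds that a \emph{new} pinch point is created at the origin, so that together with the single common image of $(\pm2,0,0)$ one again gets two pinch points. So your proof becomes complete (and in fact sharper, since it shows the paper's even case is vacuous when $\delta=2$) once you insert the parity argument above; alternatively, you could keep your setup and add the paper's analysis of the even case.
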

\begin{proof}
In the first chart, we take the quotient by either $\frac{1}{2}(1,0,1)$ if $\rho$ is even or $\frac{1}{2}(0,1,1)$ if $\rho$ is odd. 
Notice that $T_1$ has pinch points at $(2,0,0)$ and $(-2,0,0)$.
If $\rho$ is even the action interchanges the pinch points and is free everywhere except on the points where $X_1=0$. 
Now $$\frac{1}{2}(1,0,1) \simeq A_1 \times \A^1_{Y_1}$$
with $A_1=(uw=v^2)$ where $u=X_1^2$, $v=X_1Z_1$ and $w=Z_1^2$. Then the equation of $T_1$ can be written as $$Y_1v=Y_1^2+w$$ and then the quotient of $T_1$ by $Z_2$ is given by the spectrum of
$$\C[Y_1,u,v,w]/(uw-v^2, Y_1v-Y_1^2-w)$$ 
Now we can write $w=Y_1v+Y^2$ so the quotient becomes the spectrum of 
$$\C[Y_1,u,v]/(uvY_1-uY_1^2-v^2)$$
Note that we can rewrite the equation$$uvY_1-uY_1^2=v^2$$
as 
$$u(Y_1-\frac{1}{2}v)^2=v^2(1-\frac{1}{4}u)$$
and since $(1-\frac{1}{4}u)$ is a unit close to the origin, we see that the quotient also has a pinch point at the origin and normal crossings elsewhere on the line $C^-=(Y_1=Z_1=0)=(u=v=0)$. 
\vspace{0.2cm}

\noindent
If $\rho$ is odd the action does not interchange the pinch points and does not produce a pinch point at the origin when we quotient by the action. So in this case $S_1$ has two pinch points at the images of $(2,0,0)$ and $(-2,0,0)$ in the quotient $T_1/\frac{1}{2}(0,1,1)$ and it has normal crossings elsewhere on the line $C^-=(Y_1=Z_1=0)$.
\end{proof}

Recall that $X^+$ is an extremal $P$-resolution of $Y$ and it has at most two Wahl singularities $(m_1',a_1')$ and $(m_2',a_2,)$ along $C^+$. If $\delta=2$, then it follows that 
$$2=\delta=cm_1'm_2'-m_1'a_2'-m_2'a_1'$$
where $-c$ is the self intersection of $C^+$ in the minimal resolution of $X^+$. Also recall that $F=m_1'+m_2'$.

\begin{lemma}
\label{cases}
If $\delta=2$ then $F$ is even and we have the following posibilities:
\begin{itemize}
\item $X^+$ is smooth so we have $m_i',a_i'=1$, $i=1,2$ ,and $c=4$. Therefore $X^+$ is the minimal resolution of the cone over the rational normal curve of degree 4. 

\item $X^+$ has one singularity along $C^+$. Then $m_1',a_1'=1$ and $m_2'=2k+1$, $a_2'=2k-1$ for some $k\in \N$ and $c=2$. 

\item $X^+$ has two singularities along $C^+$. Then $c=1$. 
\end{itemize}
\end{lemma}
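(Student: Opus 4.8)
The goal is to prove Lemma \ref{cases}: when $\delta=2$, the parity of $F=m_1'+m_2'$ and the enumeration of the three cases for the singularities of $X^+$ along $C^+$. The starting relation is the Diophantine equation
$$2=cm_1'm_2'-m_1'a_2'-m_2'a_1',$$
where $(m_i',a_i')$ are the Wahl data of the (at most two) singular points on $C^+$, with $1\le a_i'\le m_i'$ and $\gcd(m_i',a_i')=1$, and $-c$ is the self-intersection of the proper transform of $C^+$.

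\begin{proof}[Proof proposal]
The plan is to split into the three cases according to how many of the two points $P_1,P_2$ on $C^+$ are singular (i.e.\ genuine Wahl singularities rather than smooth points), and in each case solve the defining relation $2=cm_1'm_2'-m_1'a_2'-m_2'a_1'$ together with the constraints coming from the definition of an extremal $P$-resolution. A smooth point corresponds to the trivial Wahl datum $m_i'=a_i'=1$ (since $\tfrac{1}{1}(1,0)$ is smooth), so the three bullets are exactly the cases of zero, one, or two nontrivial Wahl singularities. I would treat them in turn.

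\textbf{Zero singularities.} Here $m_1'=a_1'=m_2'=a_2'=1$, so the relation reads $2=c-1-1=c-2$, forcing $c=4$. That $X^+$ is then the minimal resolution of the cone over the rational normal curve of degree $4$ follows since $C^+$ is a single $(-4)$-curve with no adjacent singular points, and contracting a $(-4)$-curve yields precisely the cyclic quotient singularity $\frac{1}{4}(1,1)$, the cone over the degree-$4$ rational normal curve. This case is immediate.

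\textbf{One singularity.} Say $P_1$ is smooth, so $m_1'=a_1'=1$, while $(m_2',a_2')$ is a nontrivial Wahl datum, meaning $m_2'\ge 2$ and $a_2'$ satisfies the Wahl congruence. The relation becomes $2=cm_2'-a_2'-m_2'$, i.e.\ $a_2'=(c-1)m_2'-2$. Since a Wahl singularity requires $\gcd(m_2',a_2')=1$ and $1\le a_2'<m_2'$, the bound $a_2'<m_2'$ forces $(c-1)m_2'-2<m_2'$, hence $(c-2)m_2'<2$; as $m_2'\ge 2$ this gives $c=2$, so $a_2'=m_2'-2$. The Wahl condition $\gcd(m_2',m_2'-2)=\gcd(m_2',2)=1$ then forces $m_2'$ odd, say $m_2'=2k+1$, whence $a_2'=2k-1$; this is exactly the stated answer. (The symmetric subcase $P_2$ smooth is handled the same way after relabeling.)

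\textbf{Two singularities.} Here both $m_i'\ge 2$. I would argue that $c\ge 2$ is impossible: if $c\ge 2$ then $cm_1'm_2'-m_1'a_2'-m_2'a_1'\ge 2m_1'm_2'-m_1'(m_2'-1)-m_2'(m_1'-1)=m_1'+m_2'\ge 4>2$, using $a_i'\le m_i'-1$. Hence $c=1$, as claimed. Finally, parity of $F$: in each of the three cases one checks $m_1'+m_2'$ is even. In the zero-singularity case $F=2$. In the one-singularity case $F=1+(2k+1)=2k+2$ is even. In the two-singularity case, with $c=1$ the relation is $m_1'm_2'-m_1'a_2'-m_2'a_1'=2$, i.e.\ $m_1'(m_2'-a_2')-m_2'a_1'=2$; reducing modulo $2$ and using that each Wahl datum forces $m_i'$ odd (a Wahl singularity $\frac{1}{m^2}(1,ma-1)$ has $m$ coprime to $ma-1$, and one checks $m_i'$ must be odd here), I would conclude $F=m_1'+m_2'\equiv 0\pmod 2$.

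The main obstacle I expect is the two-singularity case: establishing that $m_1'$ and $m_2'$ are both odd so that $F$ is even. This requires extracting parity information from the coprimality constraints of the two Wahl data combined with the relation $m_1'm_2'-m_1'a_2'-m_2'a_1'=2$. The cleanest route is probably to reduce the relation modulo $2$ and rule out the even-$m_i'$ possibilities directly, using that $\gcd(m_i',m_i'a_i'-1)=1$ is automatic while the genuine constraint is $\gcd(m_i',a_i')=1$; a short case analysis on the parities of $m_1',m_2'$ against the right-hand side $2$ should close it. The first two cases are routine bounding arguments by comparison with the relation.
\end{proof}
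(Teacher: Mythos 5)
Your determination of $c$ in all three cases is correct and coincides essentially line by line with the paper's own proof: substituting $m_i'=a_i'=1$ gives $c=4$; in the one-singularity case the constraint $1\le a_2'<m_2'$ applied to $a_2'=(c-1)m_2'-2$ forces $c=2$ (note that you need the lower bound $a_2'\ge 1$, which you did state, to exclude $c=1$, not just the upper bound), after which $\gcd(m_2',m_2'-2)=1$ forces $m_2'$ odd; and in the two-singularity case the estimate $cm_1'm_2'-m_1'a_2'-m_2'a_1'\ge m_1'(m_2'-a_2')+m_2'(m_1'-a_1')\ge m_1'+m_2'\ge 4$ for $c\ge 2$ forces $c=1$. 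Up to this point there is nothing to object to.

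The genuine gap is in the parity of $F$ in the two-singularity case. You assert that ``each Wahl datum forces $m_i'$ odd'' and propose to ``rule out the even-$m_i'$ possibilities directly.'' Both statements are false: Wahl indices can be even, and even indices genuinely occur under the hypotheses of this lemma. For instance, take $m_1'=2$, $a_1'=1$ (the singularity $\frac{1}{4}(1,1)$, chain $[4]$) and $m_2'=4$, $a_2'=1$ (the singularity $\frac{1}{16}(1,3)$, chain $[6,2,2]$) with $c=1$; then
$$\delta=cm_1'm_2'-m_1'a_2'-m_2'a_1'=8-2-4=2,$$
and this is exactly the extremal P-resolution $[4]-(-1)-[6,2,2]$ listed in Table \ref{table} for $(f,p)=(3,1)$. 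So no argument can establish that both $m_i'$ are odd. What the mod-$2$ reduction actually yields --- and this is the paper's argument --- is that \emph{mixed} parity is impossible: if $m_1'$ is even and $m_2'$ is odd, then $2=m_1'm_2'-m_1'a_2'-m_2'a_1'$ forces $m_2'a_1'$ to be even, hence $a_1'$ even, contradicting $\gcd(m_1',a_1')=1$. Therefore $m_1'$ and $m_2'$ are both odd or both even, and in either case $F=m_1'+m_2'$ is even. Your hedged fallback (``a short case analysis on the parities\dots should close it'') would land on this argument if carried out honestly, but the plan as written --- proving oddness of the $m_i'$ --- cannot succeed, so this step needs to be corrected rather than merely filled in.
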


\begin{proof}  We will assume that $m_2'\geq m_1'$.
\noindent
If $X^+$ is smooth, then $m_i',a_i'=1$, $i=1,2$. Replacing this values in the formula for $\delta$ we get that $c=4$. Therefore $Y$ is the cone over the rational normal curve of degree 4 and $X^+$ is its minimal resolution. In this case $F=2$.
\vspace{0.2cm}

\noindent
If $X^+$ has only one singularity, then $m_1',a_1'=1$ and $m_2'\geq 2$. Replacing in the formula for $\delta$ we get 
$$2=(c-1)m_2'-a_2'$$
Note that $c=1$ is not possible since we would get that $a_2'<0$. Also $c\geq 3$ is not possible since $m_2'-a_2'\geq 1$ then we get that 
$$2=(c-1)m_2'-a_2' \geq 2m_2'-a_2' \geq m_2'+1\geq 3$$ 
Therefore we must have that $c=2$ and then 
$$2=m_2'-a_2'$$
But $gcd(m_2',a_2')=1$, then we must have that they are consecutive odd numbers, i.e., $m_2'=2k+1$ and $a_2'=2k-1$ for some $k\in \N$. In this case $F=2k+2$.
\vspace{0.2cm}

\noindent
Finally, if $X^+$ has two singularities, then $1<m_1'\leq m_2'$. Note that $c\geq 2$ is not possible since we would get 
\begin{eqnarray*}
2 &=& cm_1'm_2'-m_1'a_2'-m_2'a_1'\\
   &\geq & 2m_1'm_2'-m_1'a_2'-m_2'a_1'\\
   &= & m_1'(m_2'-a_2')+m_2'(m_1'-a_1') 
\end{eqnarray*}
but $m_i'-a_i'\geq 1$, $i=1,2$ so we get that 
$$2\geq m_1'+m_2'\geq 4$$
Therefore we must have that $c=1$.
To show that $F$ is even, we will show that $m_1'$ and $m_2'$ are either both even or both odd. Assume otherwise, say $m_1'=2l$ and $m_2'=2j+1$ (the case $m_1'$ odd and $m_2'$ even follows the same argument).
Then $m_1'm_2'$ and $m_1'a_2'$ are even and since $2=m_1'm_2'-m_1'a_2'-m_2'a_1'$, then $m_2'a_1'$ has to be even. But $m_2'$ is odd so we conclude that $a_1'$ has to be even. But this contradicts the fact that $gcd(m_1',a_1')=1$. Therefore $m_1'$ and $m_2'$ are either both even or both odd, and in any of these cases we get that $F$ is even.
\end{proof}

\begin{observation}
\label{k3}
For an initial extremal neighbourhood of type $k2A$, the number $k$ in Definition \ref{seq} is always equal to 3, so $\lambda =c(k-1)-c(k)=c(2)-c(3)$, which we will assume from now on. 
\end{observation}

Now we look at the chart $S_2$. Note that the proof done in the case $\delta \geq 3$ also works in this case. So $S_2^{\nu}$ is given by 

$$(Y_2Z_2=0)\subset \frac{1}{F}(\lambda,1,-1).$$
We will analyze the minimal resolution of $S_2^{\nu}$ in the three cases of Lemma \ref{cases}. 

\begin{proof}[Proof of Proposition \ref{rnc4}]
If $X^+$ is smooth, we have $F=2$ and $\lambda=2$, therefore in this case we have $$(Y_2Z_2=0)\subset \frac{1}{2}(0,1,-1)$$
so $S_2^{\nu}$ is smooth. The fact that $C^2=(-4)$ follows from the fact that $(C^+)^2=(-4)$. 
\end{proof}

\begin{proposition}
\label{onesing}
If $X^+$ has one singularity, then $S_2^{\nu}$ has conjugate cyclic quotient singularities $\frac{1}{k+1}(1,1)$ and $\frac{1}{k+1}(1,-1)$ for some $k\geq 1$, its minimal resolution is equal to the minimal resolution of $Q\in Y$ and $C^2=(-5)$.
\end{proposition}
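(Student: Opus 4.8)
The plan is to read off the numerical invariants of this case from Lemma \ref{cases}, compute the two singular points of $S_2^\nu$ directly from its orbifold normal crossing description, and then match the resulting dual graph against the minimal resolution of $Q\in Y$ using continued fractions. First I would record the data: in the one-singularity case Lemma \ref{cases} gives $m_1'=a_1'=1$, $m_2'=2k+1$, $a_2'=2k-1$ and $c=2$, so $F=m_1'+m_2'=2(k+1)$. To compute $\lambda$ I use Observation \ref{k3}, which lets me take $k=3$ in Definition \ref{seq}, so that $\lambda=c(2)-c(3)$ with $c(3)=\delta c(2)-c(1)$. The overview formulas give $m_2=1$, $a_2=1$ and $a_1=2$, whence $c(1)=a_1=2$ and $c(2)=m_2-a_2=0$, so $c(3)=-2$ and $\lambda=2$. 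Thus $S_2^\nu=(Y_2Z_2=0)\subset \frac{1}{2(k+1)}(2,1,-1)$.

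Next I would identify the two branches. On $(Z_2=0)$ the group $\Z_{2(k+1)}$ acts on $(X_2,Y_2)$ with weights $(2,1)$; the unique element of order two acts by $(X_2,Y_2)\mapsto(X_2,-Y_2)$, a pseudo-reflection fixing $(Y_2=0)$. Quotienting first by this reflection introduces the invariant $V=Y_2^2$, and the residual group $\Z_{k+1}$ acts on $(X_2,V)$ with weights $(1,1)$, so this branch is $\frac{1}{k+1}(1,1)$. The same computation on $(Y_2=0)$, where the residual weights are $(1,-1)$, gives $\frac{1}{k+1}(1,-1)$. These are conjugate since $-1\equiv k\pmod{k+1}$, which proves the first assertion.

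Finally I would compare minimal resolutions. The two branches resolve to $[k+1]$ (from $\frac{k+1}{1}$) and $[\underbrace{2,\dots,2}_{k}]$ (from $\frac{k+1}{k}$), joined through the proper transform $C$ of $C^-$, so the minimal resolution of $S_2^\nu$ is $[k+1]-C-[\underbrace{2,\dots,2}_{k}]$. For $Q\in Y$, the empty $e$-chain and $c=2$ reduce its continued fraction to $[f_{r_2},\dots,f_1,2]$; I would then show by induction, using the generation of Wahl chains from $[4]$ by the operation $[b_1,\dots,b_r]\mapsto[2,b_1,\dots,b_{r-1},b_r+1]$ starting from $[5,2]$, that the chain of $\frac{1}{(2k+1)^2}(1,(2k+1)(2k-1)-1)$ is $[\underbrace{2,\dots,2}_{k-1},5,k+1]$. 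Reversing and appending $c=2$ yields $[k+1,5,\underbrace{2,\dots,2}_{k}]$, which is exactly the chain above with $C$ as its middle curve; hence the two minimal resolutions coincide and $C^2=-5$.

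The main obstacle is the treatment of the non-free action $\frac{1}{2(k+1)}(2,\pm 1)$: because $\gcd(2,F)=2$ these are not reduced cyclic quotient singularities, and it is the pseudo-reflection reduction, rather than a naive rescaling of the weights, that is responsible for the drop of the index from $F$ to $k+1$. The other delicate point is pinning down that the middle entry of the continued fraction is exactly $5$, which is what identifies $C^2=-5$ independently of the general $\delta=2$ computation in Theorem \ref{delta2}; I would double-check the base case $[5,2]$ (with $m=3$, $a=1$) and the induction on $(m,a)=(2k+1,2k-1)$ against the fraction $\frac{(2k+1)^2}{4k^2-2}$.
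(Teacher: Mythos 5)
Your proposal is correct and takes essentially the same route as the paper's proof: read the data off Lemma \ref{cases}, compute $\lambda=2$, identify $S_2^{\nu}=(Y_2Z_2=0)\subset\frac{1}{k+1}(1,1,-1)$ with conjugate singularities $\frac{1}{k+1}(1,\pm 1)$, establish that the Wahl chain of $(2k+1,2k-1)$ is $[2,\ldots,2,5,k+1]$ (with $k-1$ twos), and match this configuration against the chain $[k+1,5,2,\ldots,2]$ of $Q\in Y$ to force $C^2=-5$. The only deviations are in sub-steps: the paper gets the Wahl chain by directly expanding the conjugate fraction $\frac{(2k+1)^2}{2(2k+1)-1}$ rather than by your induction on the chain operation, and it simply asserts the two facts you verify in detail, namely $\lambda=2$ and the drop of the index from $F=2k+2$ to $k+1$ (which, as you note, is due to the pseudo-reflection in the restricted action).
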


\begin{proof}
If $X^+$ has one singularity, by Lemma \ref{cases} we have $m_1'=a_1'=1$, $m_2'=2k+1$, $a_2'=2k-1$ for some $k\geq 1$ and $F=2k+2$. One can also check that $\lambda=2$, therefore in this case we have that $S_2^{\nu}$ is given by $$(Y_2Z_2=0)\subset \frac{1}{k+1}(1,1,-1)$$
so it has conjugate singularities $$\frac{1}{k+1}(1,1), \text{ and } \frac{1}{k+1}(1,-1).$$
\vspace{0.2cm}

\noindent
We will prove that the Hirzebruch-Jung continued fraction corresponding to the Wahl singularity with $m_2'=2k+1$ and $a_2'=2k-1$ for $k\geq 1$ is of the form 
 $$[\underbrace{2, \ldots , 2}_{k-1},5,k+1]$$

We will use the following well-known facts about Wahl singularities: If $[a_1, \ldots , a_r]$ is the continued fraction of a  Wahl singularity $(m,a)$, then the conjugate cyclic quotient singularity is the Wahl singularity $(m, m-a)$ and its continued fraction is $[a_r, \ldots , a_1]$.
Then it is enough to prove that the Hirzebruch-Jung continued fraction corresponding to the Wahl singularity with $m=2k+1$ and $a=2$ for $k\geq 1$ is of the form 
$$[k+1, 5, \underbrace{2, \ldots , 2}_{k-1}]$$
If $m=2k+1$ and $a=2$ then note that
$$\frac{(2k+1)^2}{2(2k+1)-1}=\frac{4k^2+4k+1}{4k+1}=(k+1)-\frac{k}{4k+1}$$
so $k+1$ is the first number in the continued fraction. Then
$$\frac{4k+1}{k}=5-\frac{k-1}{k}$$
so $5$ is the second number in the continued fraction and now we are left with $\frac{k}{k-1}$ which is a $A_{k-1}$ singularity and we know that its continued fraction is $[2,\ldots, 2]$ where we have $k-1$ curves. Therefore we conclude that  
$$\frac{(2k+1)^2}{2(2k+1)-1}=[k+1, 5, \underbrace{2, \ldots , 2}_{k-1}]$$
\vspace{0.3cm}

\noindent
Now $X^+$ has a Wahl singularity with $m_2'=2k+1$ and $a_2'=2k-1$ which is represented by the continued fraction $$\frac{m_2'^2}{m_2'a_2'-1}=[k+1, 5, \underbrace{2, \ldots ,2}_{k-1 \text{ curves}}].$$
In Lemma \ref{cases} we showed that in this case $c=2$, therefore in the minimal resolution of $X^+$ we have a configuration of rational curves 
$$[k+1, 5, \underbrace{2, \ldots ,2}_{k \text{ curves}}]$$
Note that 
$$\frac{1}{k+1}(1,1)=[k+1], \text{ and } \frac{1}{k+1}(1,1)=\underbrace{[2, \ldots ,2}_{k \text{ curves}}]$$
so the minimal resolution of $X^+$ is equal to the minimal resolution of $S_2^{\nu}$ and we conclude then the proper transform of $C^-$ has to be the $(-5)$-curve in this configuration.  
\end{proof}

\begin{proposition}
\label{twosing}
If $X^+$ has two singularities, then $S_2^{\nu}$ has conjugate cyclic quotient singularities $\frac{1}{f}(p,1)$ and $\frac{1}{f}(p,-1)$ where $f=F/2$ and $p=\lambda/2$. Its minimal resolution is equal to the minimal resolution of $Q\in Y$ and $C^2=(-5)$.
\end{proposition}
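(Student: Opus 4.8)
The plan is to mirror the structure of the proof of Proposition \ref{onesing}, treating the two-singularity case by the same three-step pattern: first identify $S_2^\nu$ explicitly from Theorem \ref{diagonal}, then compute the Hirzebruch-Jung continued fractions of its cyclic quotient singularities, and finally match these against the minimal resolution of $X^+$ to pin down $C^2$. By Lemma \ref{cases}, in the two-singularity case we have $c=1$ and $F=m_1'+m_2'$ even, so $f=F/2$ is a well-defined integer. From the general description $S_2^\nu = (Y_2Z_2=0)\subset \frac{1}{F}(\lambda,1,-1)$ obtained in Theorem \ref{diagonal}, the first step is to verify that $\lambda = c(2)-c(3)$ (using Observation \ref{k3}) is even, so that $p=\lambda/2$ makes sense, and that the branches carry the singularities $\frac{1}{F}(\lambda,1)$ and $\frac{1}{F}(\lambda,-1)$. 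The key reduction is that when $\lambda$ and $F$ are both even, the quotient $\frac{1}{F}(\lambda,1,-1)$ restricted to each branch descends to a $\frac{1}{f}(p,\pm 1)$ singularity with $f=F/2$, $p=\lambda/2$; this is the analogue of the step $\frac{1}{2k+2}(2,1,-1)=\frac{1}{k+1}(1,1,-1)$ in Proposition \ref{onesing}, and it requires checking $\gcd(p,f)=1$.

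First I would compute $\lambda = c(2)-c(3)$ explicitly from Definition \ref{seq}. Recall $c(2)=m_2-a_2$ and, with $k=3$ by Observation \ref{k3}, the relation $c(1)+c(3)=\delta c(2)=2c(2)$ gives $c(3)=2c(2)-c(1)=2(m_2-a_2)-a_1$. Substituting the definitions $m_2=m_1'$, $a_2=m_1'-a_1'$ (valid when $m_1'\neq a_1'$, which holds since $m_1',a_1'>1$ and $\gcd=1$ force $m_1'\neq a_1'$), together with $m_1=2m_1'+m_2'$ and the formula for $a_1$, I would simplify $\lambda$ to a clean expression in $m_1',m_2',a_1',a_2'$ and use $c=1$, i.e. $m_1'm_2'-m_1'a_2'-m_2'a_1'=2$, to show $\lambda$ is even and that $p=\lambda/2$ is coprime to $f=F/2$. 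I expect this to be the main obstacle: verifying $\gcd(p,f)=1$ and that $\lambda$ has the right parity is a delicate number-theoretic check tying together all four invariants, and getting the correct residue $\lambda \bmod F$ from the toric pairing is where sign and congruence errors are most likely to creep in.

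The second step is to compute the Hirzebruch-Jung continued fractions of the conjugate singularities $\frac{1}{f}(p,1)$ and $\frac{1}{f}(p,-1)$. The continued fraction of $\frac{f}{f-p}$ (for the $(p,-1)$ branch, after conjugation) and of $\frac{f}{p}$ should reproduce, up to reversal, the chains $[a_1,\ldots,a_r]$ and $[b_1,\ldots,b_s]$ appearing in the statement of Theorem \ref{delta2}. Here I would invoke the standard fact, already used in Proposition \ref{onesing}, that conjugate cyclic quotient singularities $(f,p)$ and $(f,f-p)$ have reversed continued fractions, so it suffices to analyze one branch.

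The final step is to assemble the global picture on the minimal resolution $\bar X$ and identify $C^2=-5$. Since $c=1$, the proper transform of $C^+$ in the minimal resolution of $X^+$ is a $(-1)$-curve, and I would argue — exactly as in Proposition \ref{onesing}, where $c=2$ gave the extra $+1$ turning the $(-4)$-configuration into a $(-5)$-curve — that the blow-up/antiflip combinatorics contributes the remaining $-4$, so that the curve $C$ in $\bar X$ satisfies $C^2=-5$. Concretely, I would match the two conjugate chains of $S_2^\nu$ against the chains flanking the $(-1)$-curve in the minimal resolution of $X^+$, conclude these minimal resolutions coincide (as asserted), and read off $C^2=-5$ from the intersection theory on $\bar X$. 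The verification that the minimal resolution of $S_2^\nu$ equals that of $Q\in Y$ follows by comparing the continued fraction $\frac{\Delta}{\Omega}=[f_{r_2},\ldots,f_1,c,e_1,\ldots,e_{r_1}]$ of $Y$ (with $c=1$) against the continued fraction of $\frac{f}{p}$ built from the two conjugate chains joined through the $(-5)$-curve.
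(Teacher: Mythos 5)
Your opening step coincides with the paper's: by Lemma \ref{cases} we have $c=1$ and $F$ even, $\lambda=c(2)-c(3)=m_2'-a_2'+a_1'$ is also even, and $(Y_2Z_2=0)\subset\frac{1}{F}(\lambda,1,-1)$ rewrites as $(Y_2Z_2=0)\subset\frac{1}{f}(p,1,-1)$ with $f=F/2$, $p=\lambda/2$, giving the conjugate singularities $\frac{1}{f}(p,\pm 1)$; your insistence on checking $\gcd(p,f)=1$ is legitimate and follows from the identity $m_1'p-a_1'f=\big(m_1'm_2'-m_1'a_2'-m_2'a_1'\big)/2=\delta/2=1$. The genuine gap is in your final step. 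The matching argument of Proposition \ref{onesing} does not transfer to the two-singularity case, precisely because $c=1$: the proper transform of $C^+$ in the minimal resolution of $X^+$ is then a $(-1)$-curve, so that surface is \emph{not} the minimal resolution of $Q\in Y$ (contracting the $(-1)$-curve triggers a cascade of further contractions), and the chains flanking the $(-1)$-curve are the Wahl chains $[e_1,\ldots,e_{r_1}]$, $[f_{1},\ldots,f_{r_2}]$, which are not the chains of $\frac{1}{f}(p,1)$ and $\frac{1}{f}(p,-1)$. For instance, for $(f,p)=(5,2)$ in Table \ref{table}, $X^+$ is $[3,2,6,2]-(-1)-[5,2]$ while $X^{\nu}$ is $[3,2]-(-5)-[3,2]$: there is nothing to match curve by curve. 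In Proposition \ref{onesing} the value $-5$ could be read off because the entry $5$ literally occurs in the Wahl chain $[k+1,5,2,\ldots,2]$ and the two resolutions of $Y$ have the same exceptional curves; here there is no such source for the $-5$, and your assertion that ``the blow-up/antiflip combinatorics contributes the remaining $-4$'' is exactly the claim that needs proof. Your outline is moreover circular: you deduce $C^2=-5$ from the equality of the two minimal resolutions, but you verify that equality using ``the two conjugate chains joined through the $(-5)$-curve.''

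The paper closes this step by a genuinely different, non-combinatorial argument, and this is the real content of its proof. Since the germs $(C^+\subset X^+)$ and $(C^-\subset X^-)$ admit $\Q$-Gorenstein smoothings with the same general fiber, Looijenga's self-intersection satisfies $(K_{X^+})^2=(\omega_{X^-})^2$. Writing $K_{X^+}=(\pi^+)^*K_Y+aC^+$ and $\nu^*\omega_{X^-}=(\pi^-)^*K_Y+(b+1)\tilde{C}$, and computing $a$ and $b+1$ via the different formula, one finds $a^2(C^+)^2=-4/F^2$ and $(b+1)^2\tilde{C}^2=-16/(F^2\tilde{C}^2)$, whence $\tilde{C}^2=-4$; the pullback formula $\pi^*\tilde{C}=C+\frac{p}{f}A+\frac{f-p}{f}B$ then gives $C^2=\tilde{C}^2-1=-5$. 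Note the logical order is opposite to yours: once $C^2=-5$ is known, the exceptional configuration of the resolution of $X^{\nu}$ contains no $(-1)$-curves, and only then does it follow that it is the minimal resolution of $Q\in Y$. If you want a purely combinatorial proof in the spirit of Proposition \ref{onesing}, you would need to track the full cascade of blow-downs of $[f_{r_2},\ldots,f_1,1,e_1,\ldots,e_{r_1}]$ and show it terminates in the chains of $\frac{1}{f}(p,\pm 1)$ joined by a $(-5)$-curve --- a substantial computation (essentially Proposition \ref{fp} run in reverse) that your proposal does not supply.
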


\begin{proof}
If $X^+$ has two singularities, then $F$ is even and $c(2)-c(3)=m_2'-a_2'+a_1'$ is also even. Therefore in this case we have 
$$(Y_2Z_2=0)\subset \frac{1}{f}(p,1,-1)$$
where $f=F/2$ and $p=\lambda/2=(m_2'-a_2'+a_1')/2$, so $S_2^{\nu}$ has conjugate singularities $$\frac{1}{f}(p,1), \text{ and } \frac{1}{f}(p,-1).$$
Now we need to prove that in the minimal resolution of $(X^-)^{\nu}$, the proper transform of $C^-$ is a $(-5)$-curve.
\vspace{0.2cm}

\noindent
Let $\tilde{C}$ be the proper transform of $C^-$ in $X^{\nu}$ and $C$ be the proper transform of $\tilde{C}$ in $\bar{X}$. In $\bar{X}$, let $A$ and $B$ be the exceptional curves that intersect $C$ obtained by resolving the singularities of $X^{\nu}$. Let $\pi \colon \bar{X} \to X^{\nu}$, then 
$$\pi^* \tilde{C}=C+\frac{p}{f}A+\frac{f-p}{f}B$$
so using the projection formula we have
$$\tilde{C}^2=(\pi^* \tilde{C})(\pi_* C)= C^2+\frac{p}{f}+\frac{f-p}{f}=C^2+1$$
Therefore, proving that $C^2=-5$ is equivalent to proving that $\tilde{C}^2=-4$. 
\vspace{-0.2cm}

\noindent
Now germs $(C^- \subset X^-)$ and $(C^+\subset X^+)$ have $\Q$-Gorenstein smoothings with the same general fiber, then we must have $(K_X^+)^2=(\omega_{X^-})^2$, see \cite{L86} for a definition of the self-intersection in the non-compact case. 
Let $\pi^+ \colon X^+ \to Y$ and $\pi^- \colon X^- \to Y$. Then 
$$K_{X^+}=(\pi^+)^* K_Y+aC^+$$
and 
$$K_{X^{\nu}}=(\pi^-)^* K_Y +b\tilde{C}.$$
We abuse notation and let $\pi^- \colon X^{\nu} \to Y$. Then 
$$\nu^* \omega_{X^-}=K_{X^{\nu}}+\tilde{C}=(\pi^-)^* K_Y +(b+1)\tilde{C}.$$
And since the self-intersections are equal then we must have 
$$a^2(C^+)^2=(b+1)^2(\tilde{C})^2$$
Now 
$$(C^+)^2=-1+\frac{m_1'a_1'-1}{m_1'^2}+\frac{m_2'a_2'-1}{m_2'^2}=-(\frac{1}{m_1'}+\frac{1}{m_2'})^2$$

On the other hand, using the \emph{different formula} (see Section 16 of \cite{K92}), we have 
$$(K_{X^+}+C^+)C^+=-2+(1-\frac{1}{m_1'^2})+(1-\frac{1}{m_2'^2})=-(\frac{1}{m_1'^2}+\frac{1}{m_2'^2})$$
but also
$$(K_{X^+}+C^+)C^+=(a+1)(C^+)^2$$
so from this two expressions we get that 
$$a=\frac{-2m_1'm_2'}{(m_1'+m_2')^2}$$
and we conclude that 
$$a^2(C^+)^2=-\frac{4}{(m_1'+m_2')^2}=-\frac{4}{F^2}$$
We can do the same calculation for $X^{\nu}$ and we get 
$$(K_{X^{\nu}}+\tilde{C})\tilde{C}=-2+2(1-\frac{2}{F})=-\frac{4}{F}$$
but also 
$$(K_{X^{\nu}}+\tilde{C})\tilde{C}=(b+1)\tilde{C}^2$$
so we get that 
$$(b+1)=-\frac{4}{F(\tilde{C})^2}$$
and then 
$$(b+1)^2(\tilde{C})^2=-\frac{16}{F^2(\tilde{C})^2}.$$
Finally 
$$-\frac{4}{F^2}=a^2(C^+)^2=(b+1)^2(\tilde{C})^2=-\frac{16}{F^2(\tilde{C})^2}$$
and we get that $\tilde{C}^2=-4$ and then $C^2=-5$.
\end{proof}

\begin{observation}
In the previous propositions we have assumed that $m_2'\geq m_1'$, but the same proofs work if $m_1' \geq m_2'$. In fact, as it will be shown in the next proposition, when $\delta=2$, then $Q\in Y$ has two extremal P-resolutions, one with $m_2'> m_1'$ and one with $m_1'> m_2'$ unless $Q\in Y$ is the cone over the rational normal curve of degree 4 which has only one extremal P-resolution. 
\end{observation}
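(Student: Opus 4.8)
The plan is to deduce the statement from the two-to-one correspondence of Theorem~\ref{delta2}, the bound of \cite{HTU} that a cyclic quotient singularity admits at most two extremal $P$-resolutions, and Propositions~\ref{onesing}--\ref{twosing}. The crucial point is that the pair $(p,f)$ attached to an extremal $P$-resolution $X^+$ is an invariant of $Q\in Y$ alone. Indeed, by Propositions~\ref{onesing} and~\ref{twosing} the minimal resolution of $S_2^\nu$ coincides with the minimal resolution of $Q\in Y$, and on the latter $(p,f)$ is read off from the two conjugate cyclic quotient singularities $\frac{1}{f}(p,1)$ and $\frac{1}{f}(p,-1)$ flanking the central $(-5)$-curve. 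Conversely $(p,f)$ rebuilds this chain, hence reconstructs $Q\in Y$ uniquely among the $\delta=2$ singularities other than the cone over the rational normal curve of degree four.

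First I would use this reconstruction to reduce the count to Theorem~\ref{delta2}: two extremal $P$-resolutions with the same associated pair $(p,f)$ are $P$-resolutions of the same $Q\in Y$, so the fiber over each admissible $(p,f)$ consists of two distinct extremal $P$-resolutions of a single $Y$, whence $Y$ has at least two. The bound of \cite{HTU} then forces exactly two.

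Next I would pin down the inequality between $m_1'$ and $m_2'$. Setting $m_1'=m_2'=m$ in $\delta=cm_1'm_2'-m_1'a_2'-m_2'a_1'=2$ gives $m\,(cm-a_1'-a_2')=2$, so $m\mid 2$; since $\gcd(m,a_i')=1$, the case $m=2$ is excluded and $m=1$ forces $a_1'=a_2'=1$ and $c=4$, i.e.\ Case~1 of Lemma~\ref{cases}, the cone over the rational normal curve of degree four. Hence, away from that case, $m_1'\neq m_2'$ for every extremal $P$-resolution, and with the convention that $(m_2',a_2')$ and $(m_1',a_1')$ label the left and right Wahl singularities along $C^+$ (as in the continued fraction $[f_{r_2},\dots,f_1,c,e_1,\dots,e_{r_1}]$ of $Y$), the two members of the fiber are the reflected pair furnished by Theorem~\ref{delta2}, placing the heavier singularity on opposite sides; one therefore has $m_2'>m_1'$ and the other $m_1'>m_2'$.

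It remains to treat $Q\in Y=\frac{1}{4}(1,1)=[4]$. Its resolution graph is a single $(-4)$-curve, so the only partial resolution carrying an exceptional $\PP^1$ flanked by Wahl singularities is the minimal resolution itself, with $m_1'=m_2'=a_1'=a_2'=1$ and $c=4$; every blow-up produces a side of the form $[5]$ or $[6]$, which is not a Wahl singularity. This configuration is symmetric, so the reflection that yields a second $P$-resolution in the general case acts trivially and $Y$ has exactly one extremal $P$-resolution, in agreement with Proposition~\ref{rnc4}. I expect the main obstacle to be the very first step: showing cleanly that $(p,f)$ is a complete invariant of $Q\in Y$---both independent of the chosen $P$-resolution and sufficient to reconstruct $Y$---since this is exactly what guarantees that the fibers of the correspondence consist of $P$-resolutions of a single singularity rather than of two distinct ones.
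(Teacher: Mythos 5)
Your skeleton is in fact the paper's own: the Observation is proved there by the explicit constructions of Proposition~\ref{fp} combined with the at-most-two bound of \cite{HTU}, and your ``complete invariant'' step is exactly the role Propositions~\ref{onesing} and~\ref{twosing} play (the chains of $\frac{1}{f}(p,1)$ and $\frac{1}{f}(p,-1)$ glued to the central $(-5)$-curve reproduce the minimal resolution of $Q\in Y$, so $(p,f)$ reconstructs $Y$); you correctly identified this as the load-bearing point. Two caveats on your middle step, though. The computation $m(cm-a_1'-a_2')=2$ does show $m_1'\neq m_2'$ away from the cone case, but excluding $m=2$ is a parity argument, not the $\gcd$ alone: $\gcd(2,a_i')=1$ forces $a_i'=1$, and then $2c-2=1$ is impossible. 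More importantly, knowing $m_1'\neq m_2'$ for each member of the fiber, plus distinctness of the two members, does not by itself yield \emph{opposite} inequalities: a priori both could carry the larger singularity on the same side. The opposite orientations are read off from the explicit solutions in Proposition~\ref{fp}, where one satisfies $m_1'<f<m_2'=2f-m_1'$ and the other $m_2'<f<m_1'=2f-m_2'$. Since the Observation explicitly defers to that proposition you may of course cite it, but then your inequality computation is a pleasant redundancy rather than a proof of the orientation claim.

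The genuine gap is your uniqueness argument for the cone over the rational normal curve of degree four. The claim that every blow-up produces a side of the form $[5]$ or $[6]$, never a Wahl chain, is false: two successive blow-ups give the configuration $[5,2,1]$ over $[4]$, and $[5,2]$ is the Wahl singularity $\frac{1}{9}(1,2)$ (that is, $m'=3$, $a'=1$); contracting it yields a partial resolution of $\frac{1}{4}(1,1)$ whose exceptional curve is a $\PP^1$ flanked by a genuine Wahl singularity, and iterating blow-ups produces infinitely many such candidates. What excludes them is not the Wahl condition but relative ampleness of $K_{X^+}$: here the proper transform of the exceptional curve is a $(-1)$-curve, so $c=1$ and $\delta = 1\cdot 3\cdot 1 - 3\cdot 1 - 1\cdot 1 = -1 < 0$, and one computes $K_{X^+}\cdot C^+ = \delta/(m_1'm_2') < 0$. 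A repair consistent with the paper's logic: $\delta$ depends only on $Q\in Y$ \cite{HTU}, so a second extremal P-resolution of $[4]$ would again have $\delta=2$ and hence fall under Lemma~\ref{cases}; in the two singular cases Propositions~\ref{onesing} and~\ref{twosing} force the minimal resolution chain of $Y$ to contain a $5$ inside a chain of length at least three, impossible for the single $(-4)$-curve, while the smooth case is the minimal resolution itself, as in Proposition~\ref{rnc4}.
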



\begin{proposition}
\label{fp}
Given integers $f,p$ with $1\leq p \leq \frac{f}{2}$ and $gcd(p,f)=1$, there are two extremal $P$-resolutions $X^+$ with Wahl singularities $(m_1',a_1')$ and $(m_2', a_2')$ such that:
\begin{enumerate}
\item $\delta=2$
\item If $\X^+$ is a $\Q$-Gorenstein smoothing of $X^+$ with axial multiplicities $\alpha_1=\alpha_2$ and $\X^-$ is the antiflip, then the normalization of the special fiber $X^-$ has singularities
$$\frac{1}{f}(p,1), \text{ and } \frac{1}{f}(p,-1).$$
\end{enumerate}
\end{proposition}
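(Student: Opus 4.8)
The plan is to invert the relations established in Propositions~\ref{onesing} and~\ref{twosing}, which read off $(f,p)$ from the numerical data $(m_1',a_1',m_2',a_2')$ of an extremal $P$-resolution with $\delta=2$; so I will translate the two requirements of the proposition into a system of integer equations and exhibit its solutions. Working in the two-singularity case (so $c=1$ by Lemma~\ref{cases}, and $\lambda=c(2)-c(3)=a_1'+m_2'-a_2'$ by Observation~\ref{k3}), the conditions become
\begin{align*}
m_1'+m_2' &= 2f, \\
a_1'+m_2'-a_2' &= 2p, \\
m_1'm_2'-m_1'a_2'-m_2'a_1' &= 2 ,
\end{align*}
where the first two encode $F=2f$ and $\lambda=2p$ (so that Proposition~\ref{twosing} outputs precisely $\frac{1}{f}(p,\pm1)$), and the last is $\delta=2$.

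Next I would solve this system. Eliminating $a_2'$ using the second equation and substituting into the third, then applying the first, collapses everything to the single Diophantine equation $pm_1'-fa_1'=1$. Since $\gcd(p,f)=1$ this is solvable, and together with $m_1'+m_2'=2f$ and positivity of $m_1',m_2'$ it forces $m_1'\equiv p^{-1}\pmod f$ with $0<m_1'<2f$; hence there are exactly the two choices $m_1'=q$ and $m_1'=q+f$, where $q\in\{1,\dots,f-1\}$ is the inverse of $p$ modulo $f$. These produce the two extremal $P$-resolutions, with $a_1'=(pm_1'-1)/f$ and $a_2'=a_1'+m_2'-2p$ in each case.

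I would then verify that each solution is a genuine extremal $P$-resolution with $\delta=2$. The coprimality $\gcd(m_1',a_1')=1$ is immediate from $pm_1'-fa_1'=1$; the relation $\delta=2$ gives $\gcd(m_2',a_2')\mid 2$, and a short parity argument as in Lemma~\ref{cases} (distinguishing $m_1'$ odd from $m_1'$ even) excludes the factor $2$. Because $\delta=2>0$ we have $K_{X^+}\cdot C^+=2/(m_1'm_2')>0$, so $K_{X^+}$ is relatively ample; together with the Wahl (hence $T$-singularity) and $C^+\simeq\PP^1$ conditions, the configuration of two Wahl chains joined by the $(-1)$-curve therefore contracts to a cyclic quotient singularity $Q\in Y$ and is an extremal $P$-resolution in the sense of \cite{HTU}. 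The two solutions are distinct since their $m_1'$ differ by $f$, and one checks by contracting the $(-1)$ in the dual chain that they resolve the same $Q$. Conclusion~(2) is then exactly Proposition~\ref{twosing}: with $F=2f$ and $\lambda=2p$ it yields $f'=F/2=f$ and $p'=\lambda/2=p$, so the normalization of $X^-$ has the conjugate singularities $\frac{1}{f}(p,1)$ and $\frac{1}{f}(p,-1)$; the hypothesis $p\le f/2$ merely selects a single representative of the conjugate pair, removing the redundancy $p\leftrightarrow f-p$.

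The main obstacle I anticipate is boundary bookkeeping rather than the central algebra. When $p=1$ (and for the smallest values of $f$) the choice $m_1'=q$ degenerates to $a_1'=0$, i.e.\ a smooth point with $c=2$ rather than a two-singularity configuration, so that solution must instead be recognized as the one-singularity extremal $P$-resolution treated by Proposition~\ref{onesing} (with $k=f-1$, giving $\frac{1}{f}(1,\pm1)$). Thus the delicate part is to check the Wahl ranges $0<a_i'<m_i'$ uniformly and to route the degenerate solutions correctly through Proposition~\ref{onesing} versus Proposition~\ref{twosing}, while confirming that in every case one obtains exactly two distinct extremal $P$-resolutions realizing the prescribed pair $(f,p)$.
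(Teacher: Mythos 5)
Your proposal is correct and takes essentially the same route as the paper: both reduce the problem to the Diophantine equation $pm_1'-fa_1'=1$, and your two solutions $m_1'=q$ and $m_1'=q+f$ (with $0<m_1'<2f$) are exactly the paper's two constructions, which it presents as the separate equations $m_1'p-a_1'f=1$ with $1<m_1'<f$ and $qf-m_2'p=1$ with $1<m_2'<f$, likewise routing the degenerate $p=1$ (one-singularity, $c=2$) case through Proposition \ref{onesing} and the rest through Proposition \ref{twosing}. The only cosmetic differences are your unified derivation of the equation from the conditions $F=2f$, $\lambda=2p$, $\delta=2$ (which additionally gives exactness of the count) and your divisibility-plus-parity argument for $\gcd(m_2',a_2')=1$, where the paper instead uses a determinant manipulation.
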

\begin{proof}

If $p=1$, then the two extremal P-resolutions have singularities given by the data
\begin{enumerate}
\item[(i)] $m_1'=a_1'=1$, $m_2'=2f-1$ and $a_2'=2f-3$.
\item[(ii)] $m_1'=f+1$, $a_1'=1$, $m_2'=f-1$ and $a_2'=f-2$.
\end{enumerate}
In the first case $X^+$ has only one singularity and the second case $X^+$ has two singularities. For both surfaces we have that $\delta=2$ and by Proposition \ref{onesing} and Proposition \ref{twosing}  it follows that $(X^-)^{\nu}$ has singularities 
$$\frac{1}{f}(p,1), \text{ and } \frac{1}{f}(p,-1)$$. 
\vspace{0.2cm}

\noindent
If $1<p<f/2$, since $gcd(p,f)=1$, then there exist unique integers $m_1',a_1'$ with $1<m_1'<f$ such that 
$$m_1'p-a_1'f=1.$$
Note that $a_1'>0$, otherwise we would get that $m_1'p-a_1'f>1$. Also we must have that $a_1'<m_1'$, otherwise we would get that $f<p$. Define $m_2'=2f-m_1'$ and $a_2'=m_2'+a_1'-2p$. Then we need to check that $gcd(m_2',a_2')=1$ and that $\delta=2$. 
Note that $1<m_1'< m_2'$, therefore we are in the case when $X^+$ has two singularities. Then $c=1$ and 
\begin{eqnarray*}
\delta &=& m_1'm_2'-m_1'a_2'-m_2'a_1'\\
   &=& m_1'm_2'-m_1'(m_2'+a_1'-2p)-(2f-m_1')a_1'\\
   &=& 2(pm_1'-fa_1')\\
   &=& 2 
\end{eqnarray*}

Note that we can write $f$ and $p$ in terms of $m_1',a_1',m_2',a_2'$ as 
\[
\begin{bmatrix}
2f\\
2p
\end{bmatrix}=
\begin{bmatrix}
m_1' & m_2'\\
a_1' & m_2'-a_2'
\end{bmatrix} 
\begin{bmatrix}
1\\
1
\end{bmatrix}
\]  
and note that $$det 
\begin{bmatrix}
m_1' & m_2'\\
a_1' & m_2'-a_2'
\end{bmatrix}
=\delta =2
$$ 
Then 
 $$det 
\begin{bmatrix}
2f & m_2'\\
2p & m_2'-a_2'
\end{bmatrix}
=2
\Rightarrow 
det 
\begin{bmatrix}
f & m_2'\\
cp& m_2'-a_2'
\end{bmatrix}
=1
$$ 
so we conclude that $1=gcd(m_2',m_2'-a_2')=gcd(m_2',a_2')$.
\vspace{0.2cm}

\noindent
For the other extremal P-resolution, using again the fact that $gcd(f,p)=1$, there are unique integers $m_2', q$ with $1<m_2'<f$ such that 
$$qf-m_2'p=1.$$
Clearly $q>0$, and note that $q<m_2'$, otherwise we would get that 
$$1=qf-m_2'p \geq m_2'(f-p)$$
which is not possible since the expression on the right hand side is greater that one. Define $a_2'=m_2'-q$, $m_1'=2f-m_2'$ and $a_1'=2p-q=2p-m_2'+a_2'$. Then we need to check that $\delta=2$ and $gcd(m_1',a_1,)=1$. Note that $gcd(m_2',a_2')=1$ and $1<m_2'<m_1'$, therefore we are in the case when $X^+$ has two singularities. Then $c=1$ and 
\begin{eqnarray*}
\delta &=& m_1'm_2'-m_1'a_2'-m_2'a_1'\\
   &=& m_1'm_2'-m_1'(m_2'-q)-m_2'(2p-q)\\
   &=& q(m_1'+m_2')-m_2'p\\
   &=& 2(qf-m_2'p)\\
   &=& 2 
\end{eqnarray*}

\noindent
Note that we can write $f$ and $p$ in terms of $m_1',a_1',m_2',a_2'$ as 
\[
\begin{bmatrix}
2f\\
2p
\end{bmatrix}=
\begin{bmatrix}
m_1' & m_2'\\
a_1' & m_2'-a_2'
\end{bmatrix} 
\begin{bmatrix}
1\\
1
\end{bmatrix}
\]  
and note that $$det 
\begin{bmatrix}
m_1' & m_2'\\
a_1' & m_2'-a_2'
\end{bmatrix}
=\delta =2
$$ 
Then 
 $$det 
\begin{bmatrix}
m_1' & 2f\\
a_1' & 2p
\end{bmatrix}
=2
\Rightarrow 
det 
\begin{bmatrix}
m_1' & f\\
a_1' & p
\end{bmatrix}
=1
$$ 
so we conclude that $1=gcd(m_1',a_1')$.
\end{proof}

\begin{remark}
For a given pair $(f,p)$, if the minimal resolution of the normalization of $X^-$ has a chain of exceptional divisors $[q_1,\ldots, q_l]$, then note that for the pair $(f,f-p)$, the minimal resolution of the normalization of $X^-$ has the same chain of exceptional divisors but written in the opposite order, i.e. $[q_l,\ldots ,q_1]$. These two singularities are isomorphic and therefore, the corresponding special fibers $X^-$ are also isomorphic. For example, if $p=f-1$, then the two extremal P-resolutions have singularities given by the data
\begin{enumerate}
\item[(i)] $m_1'=f-1$, $a_1'=f-2$, $m_2'=f+1$ and $a_2'=1$.
\item[(ii)] $m_1'=2f-1$, $a_1'=2f-3$, and $m_2'=a_2'=1$.
\end{enumerate}
In the first case $X^+$ has two singularities and the second case $X^+$ has one singularity. For both surfaces we have that $\delta=2$ and by Proposition \ref{twosing} and Proposition \ref{onesing}  it follows that $(X^-)^{\nu}$ has singularities 
$$\frac{1}{f}(p,1), \text{ and } \frac{1}{f}(p,-1).$$
\end{remark}

\begin{proof}[Proof of Theorem \ref{delta2}]
Using Propositions \ref{onesing} and \ref{twosing}, we show how to associate a pair $(f,p)$ to a $\Q$-Gorenstein smoothing with equal axial multiplicities of a given extremal P-resolution $X^+$. Using Proposition \ref{fp}, we see that to each pair $(f,p)$ we can associate two different extremal P-resolutions, thus obtaining the two-to-one correspondence.
\end{proof}

\begin{remark}
Proposition \ref{fp} shows that, with the exception of the cone over the rational normal curve of degree 4, every cyclic quotient singularity $Q\in Y$ having an extremal P-resolution with $\delta=2$, in fact has exactly two extremal P-resolutions (recall that a cyclic quotient singularity has at most two extremal P-resolutions \cite{HTU}). This fact, together with Lemma \ref{cases} show that all these singularities satisfy the conditions of Theorem 1.2 and Theorem 1.3 of \cite{UV}, so in particular they satisfy the Wormhole conjecture (see Conjecture 1.1 in \cite{UV}).
\end{remark}


\vspace{0.2cm}

\small Department of Mathematics and Statistics, University of Massachusetts, Amherst, MA, USA.

\end{document}